\documentclass[12pt, a4paper, parskip=half, abstracton]{scrartcl}\usepackage{array}
\usepackage{marginnote}
\usepackage{xcolor}
\usepackage{amscd,amssymb,amsfonts,amsmath,latexsym,amsthm}
\usepackage{hyperref}
\usepackage[all,cmtip]{xy}
\textheight23cm
\textwidth16cm
\usepackage{mathrsfs}
\oddsidemargin0.0cm
\topmargin-1.0cm
\footskip2.0cm
\setlength\parindent{0pt}
\usepackage{graphicx}
\usepackage{bm} 
\usepackage{etoolbox}
\def\hB{\hspace*{\fill}$\qed$}

\usepackage{chngcntr} 
\usepackage[nottoc]{tocbibind} 
\setcounter{tocdepth}{2} 

\usepackage{defs_pp1}
\usepackage{slashed}
\usepackage[utf8]{inputenc}
\usepackage{microtype}
\usepackage[english]{babel}

\title{Coarse homotopy theory and boundary value problems}

\author{
Ulrich Bunke\thanks{Fakult{\"a}t f{\"u}r Mathematik,
Universit{\"a}t Regensburg,
93040 Regensburg,
GERMANY\newline
ulrich.bunke@mathematik.uni-regensburg.de} 
}

 \numberwithin{equation}{section}
\setcounter{secnumdepth}{3}
\counterwithout{footnote}{section}

\newtheorem{theorem}{Theorem}[section] 
\newtheorem{prop}[theorem]{Proposition}
\newtheorem{lem}[theorem]{Lemma}
\newtheorem{ddd}[theorem]{Definition}
\newtheorem{kor}[theorem]{Corollary}
\newtheorem{ass}[theorem]{Assumption}

\theoremstyle{remark}
\theoremstyle{definition}

\newtheorem{ex}[theorem]{Example}
\newtheorem{rem}[theorem]{Remark}

\newcommand{\UBC}{\mathbf{UBC}}

\newcommand{\Yo}{\mathrm{Yo}}

\newcommand{\BC}{\mathbf{BornCoarse}}

\newcommand{\Cofib}{\mathrm{Cofib}}

\newcommand{\cO}{{\mathcal{O}}}

\newcommand{\susp}{\mathrm{susp}}

\renewcommand{\Dirac}{\slashed{D}}

\newcommand{\dist}{\mathrm{dist}}

\newcommand{\bdc}{\mathrm{bdc}}
\begin{document}

\maketitle

\begin{abstract}
We provide an interpretation of the APS index theorem of Piazza-Schick and Zeidler in terms of coarse homotopy theory.
On the one hand we propose a motivic version of the boundary value problem, the index theorem, and the associated secondary invariants. 
On the other hand, we discuss in detail how the abstract version is related with classical case for Dirac operators.

\end{abstract}

\tableofcontents

\section{Introduction}

The goal of this note is  a  motivic interpretation of the coarse index theory for manifolds with boundary
developed in Piazza-Schick \cite{MR3286895} and Zeidler \cite{MR3551834}.

In \cite{Bunke:2016aa} we proposed  a motivic approach to the  homotopy theory of bornological coarse spaces. Its equivariant generalization  is developed 
\cite{bekw}. As explained in \cite{ass} (in the non-equivariant case) one can  interpret the basic objects of index theory, namely the symbol of a Dirac operator and its index,  as coarse indices of   Dirac operators on appropriatly defined equivariant bornological coarse spaces.  
We will recall this coarse translation of index theory in Section \ref{ewfwefwfwefewf}.  
It allows to interpret the analytic assembly map, i.e., the transition
from the symbol to the index (which is usually of functional analytic nature), as a map induced by a map between  equivariant bornological coarse spaces.

In the present paper we provide proofs of the   index theorems  \cite[Thm. 1.22]{MR3286895} and
  \cite[Thm 6.5]{MR3551834}. Our proofs are not really different from
the ones given in the references (in particular in \cite{MR3551834}), but are completely rearranged. As a byproduct we obtain a generalization from the case of free actions as considered by Piazza-Schick and Zeidler to proper actions.

We have two main reasons for reconsidering the proofs of these
theorems. The first  is that we want to separate the non-formal aspects (really depending on the analysis of Dirac operators) from formal facts which are already true on a motivic level. Our second reason is that the abstract formulation of the index theory for boundary value problems potentially has other realizations.

In Section \ref{fiofwefewfewfewfewf} we provide an abstract version of an index theory for boundary value problems. This abstract version has no relation with Dirac operators at all. Following the philosophy  explained in  \cite{ass}, by just replacing equivariant coarse $K$-homology theory by an arbitrary equivariant coarse homology theory,  we can define the notions of   symbol  and index classes, and the analytic assembly map sending symbols to indices.
Furthermore, in the geometric situation of a boundary value problem, we define the notion of a boundary condition for a symbol and derive the corresponding index of the resulting abstract boundary value problem.
As a secondary invariant of the boundary condition we define an abstract version of the $\rho$-invariant, and we obtain an abstract version Theorem \ref{wfiowefewfew} of the APS-index theorem  \cite[Thm. 1.22]{MR3286895}. The abstract version 
of  \cite[Thm 6.5]{MR3551834} is true by the definition of the objects. We leave it to the future development of the subject whether this abstract theory of boundary value  problems has other realizations than the classical one for Dirac operators.

Our abstract index theory for boundary value problems depends on some purely geometric and motivic   constructions in    coarse  homotopy theory which we perform in Section \ref{fewzfefzewifzhiufzhiwefewf}. They are very much influenced by the argument for the proof of  \cite[Thm, 6.5]{MR3551834}, in particular the big commuting diagram appearing in the proof of that theorem. The analogue of this diagram 
 appears here as Proposition \ref{ergoierjoiregregeg}. 


In order to deduce the theorems on Dirac operators  \cite[Thm. 1.22]{MR3286895} and   \cite[Thm. 6.5]{MR3551834} from the abstract versions   we must
translate the abstract objects to the  classical ones in the analysis of Dirac operators. This is done in Section \ref{rgiorggergerge}.

It turns out that besides the definition of the coarse index (with support) $\Ind\cX(\Dirac_{M}, on(A))$ we only need two basic facts about this construction: its compatibility with suspension (Proposition \ref{feiwofwefwefewfw}) and locality (Theorem \ref{roigeorgergergerg}).  The details of the construction of the index class and the verification of these two properties are the contents of \cite{index}. 
 
\paragraph{Acknowledgements}
U.~Bunke  was supported by the SFB 1085 ``Higher Invariants''
funded by the Deutsche Forschungsgemeinschaft DFG.  
He thanks Thomas Schick for pointing out the relevance of 
 \cite[Thm, 6.5]{MR3551834}.

\section{A coarse construction}\label{fewzfefzewifzhiufzhiwefewf}

We consider a group $G$.  

In  \cite{bekw} we introduced categories of 
$G$-bornological coarse spaces $G\BC$ and $G$-uniform bornological coarse spaces $G\UBC$.  These categories have symmetric monoidal structures denoted by $\otimes$. 
We further introduced the notions of equivariant coarsely or uniformly excisive decompositions. 

We have a forgetful functor $G\UBC\to G\BC$ which we often implicity apply in order to evaluate a functor defined on $G\BC$ on objects of $G\UBC$.

We further defined a universal equivariant coarse homology theory $$\Yo^{s}:G\BC\to G\Sp\cX$$
whose target is the presentable stable $\infty$-category of equivariant coarse motivic spectra. If $X$ is a $G$-bornological coarse space, then $\Yo^{s}(X)$ is called the motive of $X$.

 We now introduce the basic object of the present paper.
  \begin{enumerate} \item  We consider a  $G$-uniform bornological coarse space $W_{\infty}$. \item  We assume an   equivariant   decomposition
$(M_{\infty},W)$ of $W_{\infty}$. 
\end{enumerate}
 
 We set $M:=W\cap M_{\infty}$ and let $f:M\to W$ denote the inclusion morphism.
\begin{ass}\label{fiofojwefwefwefwefwefwe}\mbox{}\begin{enumerate} \item We assume that the decomposition $(M_{\infty},W)$ of  $W_{\infty}$  
is coarsely and uniformly excisive. 
 \item We assume that    $M_{\infty}\cong [0,\infty)\otimes M$. \footnote{For most of the paper we only need a coarse equivalence, or weaker, flasqueness. The product form is only used in Prop. \ref{fwiowoefwfewfwf}}
 \item
We assume that the inclusion $f:M\to W$ is a coarse equivalence.\end{enumerate}
\end{ass}

 \begin{rem} The Assumption \ref{fiofojwefwefwefwefwefwe} abstracts the situation of a Riemannian  $G$-manifold $W$ with boundary $M$ with product collar such $\dist(w,M)$ is uniformly bounded for $w$ running in $W$. 
\hB\end{rem}
 \begin{rem}
 In this remark we recall the notation related to the cone construction. We refer to \cite[Sec. 9]{bekw} and \cite[Sec. 8]{ass}
 for more details. 
 
 For a $G$-uniform bornological space $X$ we consider the $G$-bornological coarse space  $$\cO(X )_{-}:=(\R\otimes X)_{h}\ .$$ The subscript $h$ indicates that we replace the original coarse structure on $\R\otimes X$ by the hybrid coarse structure which in this case 
 is induced by the uniform structure and the equivariant big family $((-\infty,n]\times X)_{n\in \nat}$.
 
 \begin{ddd}
 The subset $[0,\infty)\times X$ of $\cO(X )_{-}$ with the induced $G$-bornological coarse structure is called the cone $\cO(X)$ over $X$.
 \end{ddd}

 We have a natural inclusion $X\cong \{0\}\times X\to \cO(X)$ of bornological coarse spaces.
 
 \begin{ddd}
 The equivariant coarse motivic spectrum $$\cO^{\infty}(X):=\Cofib\big(\Yo^{s}(X)\to \Yo^{s}(\cO(X))\big)$$ is called the germs-at-$\infty$ object of the cone over $X$.
 \end{ddd}

By construction we have a fibre sequence in $G\Sp\cX$ \begin{equation}\label{fgoierjgioergegergerge}
\Yo^{s}(X)\to \Yo^{s}(\cO(X))\to \cO^{\infty}(X)\stackrel{\partial^{cone}}{\to} \Sigma\Yo^{s}(X)
\end{equation}
 which is called the cone sequence.
 
If $(A,B)$ is an equivariant  decomposition of a $G$-uniform bornological coarse space  $X$, then we consider the commuting square
$$\xymatrix{A\cap B\ar[r]\ar[d]&A\ar[d]\\B\ar[r]&X}$$
 of $G$-uniform bornological coarse spaces, where the subspaces $A$, $B$ and $A\cap B$  of $X$ are equipped with the induced structures.
 If $(A,B)$ is uniformly excisive, then $\cO^{\infty}$ sends this square to a push-out square in $G\Sp\cX$. If $(A,B)$ is in addition coarsely excisive, then again
 $\Yo^{s}\circ \cO$ sends  this square to a push-out square in $G\Sp\cX$.

One of the axioms of an equivariant coarse homology theory is that it vanishes on flasques. The cone functor $\cO$ sends flasque $G$-uniform bornological coarse spaces to weakly flasque $G$-bornological coarse spaces, but not to flasques, in general.
Most of the examples of equivariant coarse homology theories (e.g., equivariant coarse $K$-homology $K\cX^{G}$)  annihilate 
weakly flasque $G$-bornological coarse spaces. Such equivariant coarse homology theories are called strong.
In 
\cite[Sec. 4.4]{bekw} we introduce the universal strong equivariant coarse homology theory
$$\Yo^{s}_{wfl}:G\BC\to G\Sp\cX_{wfl}\ .$$
 
If $X$ is  a flasque $G$-uniform bornological coarse space, then the motives  $\Yo^{s}_{wfl}(X)$, $\Yo^{s}_{wfl}(\cO(X))$ and $\cO^{\infty}_{wfl}(X)$ in $G\Sp\cX_{wfl}$ vanish.

We have a sequence  of maps  of $G$-bornological coarse spaces $$X\to \cO(X)\to \cO(X)_{-}\stackrel{\partial^{geom}}{\to} \R\otimes X\ ,$$
 where the first two maps are given by the inclusions and the morphism
 $\partial^{geom}$ is the given by the identity of underlying sets. The sequence of equivariant coarse motivic spectra  obtained by applying $\Yo^{s}$ is equivalent to the cone sequence. Indeed, we have a commuting diagram 
  \begin{equation}\label{fiuhz98ifwref}\xymatrix{
\Yo^{s}(X)\ar@{=}[d]\ar[r]&\Yo^{s}(\cO(X))\ar[r]\ar@{=}[d]&\Yo^{s}(\cO(X)_{-}) \ar[r]^{\partial^{geom}}&\Yo^{s}(\R\otimes X)\ar[d]_{\susp}^{\simeq}\\
\Yo^{s}(X)\ar[r] &\Yo^{s}(\cO(X))\ar[r]&\cO^{\infty}(X)\ar[r]^{\partial^{cone}}\ar@{..>}[u]^{c}_{\simeq}&\Sigma \Yo^{s}(X)}\ .
\end{equation}
The dotted arrow is the one induced by the left commuting square and the universal property of $\cO^{\infty}(X)$ as a cofibre.
The suspension equivalence $\susp$ can be viewed as the boundary map of the Mayer-Vietoris sequence associated to the equivariant coarsely excisive decomposition $((-\infty,0]\times X,[0,\infty)\times X)$ of $\R\otimes X$.
  \hB
 \end{rem}

\begin{rem}
If $Y$ is an invariant  subset of a $G$-bornological coarse space $X$, then we write $Y_{X}$ in order stress that $Y$ is equipped with the induced structures from $X$.\hB
\end{rem}

We consider the invariant subset
$$Y:=(-\infty,0]\times W\cup [0,\infty)\times W_{\infty}$$ of $\R\times W_{\infty}$.
 Note that $\R\times W_{\infty}$ is also the underlying $G$-set of $\cO(W_{\infty})_{-}$.

 \begin{prop}\label{ergoierjoiregregeg}We have a commuting diagram in $G\Sp\cX$: $$\xymatrix{\cO^{\infty}(W_{\infty}) &&\\\Yo^{s}(\cO(W_{\infty}))\ar[u]_{\simeq}\ar[r]^{s}\ar[d]^{\partial^{\prime\prime}}&\Yo^{s}(Y_{\cO(W_{\infty})_{-}})\ar[d]^{\partial^{\prime}}\ar[ul]_{i}\ar[r]^{\partial}&\Sigma \Yo^{s}(W)\\
\Sigma \Yo^{s}(\cO(M))\ar@{=}[r]&\Sigma \Yo^{s}(\cO(M))&\Sigma \Yo^{s}(M)\ar[u]_{\simeq}\ar[l]\ar[ul]_{j}}\ .$$
 \end{prop}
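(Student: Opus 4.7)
The plan is to construct each arrow in the diagram and then verify the commutativities via naturality of Mayer--Vietoris boundary maps. The key preliminary observation is $\Yo^s(W_\infty)\simeq 0$: coarse excisiveness of $(M_\infty,W)$ gives the pushout $\Yo^s(W_\infty)\simeq\Yo^s(M_\infty)\coprod_{\Yo^s(M)}\Yo^s(W)$; in this pushout $\Yo^s(M_\infty)\simeq 0$ by flasqueness of $M_\infty\cong[0,\infty)\otimes M$, and $f_*\colon\Yo^s(M)\to\Yo^s(W)$ is an equivalence because $f$ is a coarse equivalence, so the pushout vanishes. Fed into the cone fibre sequence~\eqref{fgoierjgioergegergerge} for $X=W_\infty$, this produces the left up-arrow $\Yo^s(\cO(W_\infty))\xrightarrow{\simeq}\cO^\infty(W_\infty)$; the right up-arrow is $\Sigma f_*$, an equivalence directly.

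Next I would build the three downward maps as Mayer--Vietoris boundaries of coarsely (and uniformly) excisive decompositions. The map $\partial$ comes from $Y=((-\infty,0]\times W)\cup([0,\infty)\times W_\infty)$: since the first piece is flasque, $\Yo^s(\cO(W_\infty))\xrightarrow{s}\Yo^s(Y_{\cO(W_\infty)_-})\xrightarrow{\partial}\Sigma\Yo^s(W)$ is a cofibre sequence, with $s$ induced by inclusion. The map $\partial'$ is the MV boundary for the alternative decomposition $Y=(\R\times W)\cup([0,\infty)\times M_\infty)$ with intersection $\cO(M)$, and $\partial''$ is that of $\cO(W_\infty)=([0,\infty)\times W)\cup([0,\infty)\times M_\infty)$; excisiveness of both inherits from excisiveness of $(M_\infty,W)$ in $W_\infty$ together with properties of the hybrid coarse structure. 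The equation $\partial''=\partial'\circ s$ is immediate from naturality of the MV boundary along $\cO(W_\infty)\hookrightarrow Y$.

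The diagonal $i$ is obtained by extending the canonical cone map $\Yo^s(\cO(W_\infty))\to\cO^\infty(W_\infty)$ along $s$. The fibre of $s$ is $\Yo^s(W)$, mapping into $\Yo^s(\cO(W_\infty))$ via $\{0\}\times W\subset\cO(W_\infty)$, and this inclusion factors through $\Yo^s(W_\infty)\simeq 0$; so the obstruction to the extension vanishes and the upper triangle commutes by construction.

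The diagonal $j$ is the most delicate piece. I would build it from the subspace $C:=(-\infty,0]\times M\cup[0,\infty)\times M_\infty\subset Y$ with the induced structure. The decomposition $(C_-,C_+)$ into flasque pieces (the left by shift towards $-\infty$, the right by outer $\R$-shift, using that $M_\infty$ is flasque) has intersection $\{0\}\times M\cong M$, giving an MV equivalence $\partial_C\colon\Yo^s(C)\xrightarrow{\simeq}\Sigma\Yo^s(M)$. Define $j:=\Yo^s(C\hookrightarrow Y)\circ\partial_C^{-1}$. The right triangle $\partial\circ j=\Sigma f_*$ follows from naturality of $\partial$: $(C_-,C_+)$ sits inside $(Y_-,Y_+)$ and the map on intersections is precisely $f\colon M\to W$. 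The left triangle $\partial'\circ j=\Sigma\mu_M$, with $\mu_M\colon M\hookrightarrow\cO(M)$ the vertex inclusion, is the main obstacle: here the decomposition of $Y$ restricts on $C$ to $(\R\times M,[0,\infty)\times M_\infty)$ with intersection $\cO(M)$, and the resulting MV boundary $\partial'_C\colon\Yo^s(C)\to\Sigma\Yo^s(\cO(M))$ must be identified with $\Sigma\mu_M\circ\partial_C$. The verification proceeds by matching the MV cofibre sequence of $C$ arising from the second decomposition, namely $\Yo^s(\cO(M))\to\Yo^s(\cO(M)_-)\to\Yo^s(C)\xrightarrow{\partial'_C}\Sigma\Yo^s(\cO(M))$, with the (shifted) cone fibre sequence of $M$ via the equivalence~\eqref{fiuhz98ifwref}, checking compatibility of the two resulting identifications $\Yo^s(C)\simeq\Sigma\Yo^s(M)$.
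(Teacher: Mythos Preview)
Your overall strategy matches the paper's: build each arrow from a Mayer--Vietoris decomposition and verify the triangles by naturality. The decompositions for $\partial$, $\partial'$, $\partial''$ and the argument for the left square are exactly those in the paper. There is, however, a genuine gap in your construction of $j$, and a smaller issue with your $i$.

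\textbf{The gap in $j$.} You take $C=(-\infty,0]\times M\cup[0,\infty)\times M_\infty\subset Y$ with the induced structure and claim that both pieces of the decomposition $(C_-,C_+)$ are flasque, so that $\partial_C:\Yo^s(C)\to\Sigma\Yo^s(M)$ is an equivalence. For $C_-$ this is fine. But $C_+=[0,\infty)\times M_\infty$ carries the structure induced from $\cO(W_\infty)_-$, i.e.\ the hybrid coarse structure; this is precisely $\cO(M_\infty)$. As noted explicitly in the paper, the cone of a flasque $G$-uniform bornological coarse space is only \emph{weakly} flasque, not flasque in general. Hence $\Yo^s(C_+)$ need not vanish in $G\Sp\cX$ (it only vanishes after passing to $G\Sp\cX_{wfl}$, cf.\ Proposition~\ref{fwiowoefwfewfwf}), and your $\partial_C$ is not an equivalence. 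Neither the ``outer $\R$-shift'' nor the flasqueness map of $M_\infty$ produces a flasqueness structure compatible with the hybrid entourages.

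The paper circumvents this by not using a subspace of $Y$ at all. It builds an explicit map of $G$-bornological coarse spaces $\tilde j:\R\otimes M\to Y$ from the \emph{product} coarse space $\R\otimes M$: the restriction to $M_-=(-\infty,0]\times M$ is $\id\times f$ into $Y_-$, and the restriction to $M_+=[0,\infty)\times M$ is the composition $M_+\cong M_\infty\hookrightarrow W_\infty\cong\{0\}\times W_\infty\hookrightarrow Y_+$. In $\R\otimes M$ the half-lines $M_\pm$ are genuinely flasque, so $\susp:\Yo^s(\R\otimes M)\to\Sigma\Yo^s(M)$ is an equivalence, and one sets $j:=\Yo^s(\tilde j)\circ\susp^{-1}$. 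Both triangles involving $j$ then follow from naturality of the Mayer--Vietoris boundary for the maps of triples $(\R\otimes M,M_-,M_+)\to(Y,Y_-,Y_+)$ and $(\R\otimes M,M_-,M_+)\to(Y,\R\times W,[0,\infty)\times M_\infty)$ induced by $\tilde j$.

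\textbf{The issue with $i$.} Your $i$ is produced as an (unspecified) extension of $\Yo^s(\cO(W_\infty))\to\cO^\infty(W_\infty)$ along $s$; such extensions form a torsor under $[\Sigma\Yo^s(W),\cO^\infty(W_\infty)]$, so this does not single out a canonical map. The paper defines $i$ concretely as the composite of $\Yo^s(Y\hookrightarrow\cO(W_\infty)_-)$ with the inverse of the equivalence $c:\cO^\infty(W_\infty)\xrightarrow{\simeq}\Yo^s(\cO(W_\infty)_-)$ from~\eqref{fiuhz98ifwref}. This specific $i$ is the one used later (Definition~\ref{oriergegegergg} and the identifications in Section~\ref{rgiorggergerge}), so the concrete description is needed.
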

\begin{proof}
The morphisms will be explained during the proof.

  We define the invariant subsets $$Y_{-}:=(-\infty,0]\times W \quad \mbox{and}\quad Y_{+}:= [0,\infty)\times W_{\infty}$$ of $Y$. These two subsets form an 
 invariant coarsely excisive decomposition $(Y_{-},Y_{+})$ of $Y_{\cO(W_{\infty})_{-}}$.  Since,  with the  induced structures,  $Y_{-}$ is flasque, $Y_{+}\cong \cO(W_{\infty})$ and $Y_{-}\cap Y_{+} \cong W$,  we have a Mayer-Vietoris sequence which is the middle sequence in the following diagram: \begin{equation}\label{f34oij3o4if34f34f3}
\xymatrix{&\cO^{\infty}(W_{\infty})&& \\ \Yo^{s}(W)\ar[r]&\Yo^{s}(\cO(W_{\infty}))\ar[u]_{\simeq}\ar[r]^{s}&\Yo^{s}(Y_{\cO(W_{\infty})_{-}})\ar[r]^{\partial}\ar[ul]_{i}&\Sigma \Yo^{s}(W)\\&& &\Sigma \Yo^{s}(M)\ar[u]^{f}_{\simeq}\ar[ul]_{j}}\ .
\end{equation}
The map $i$ is induced by the inclusion of   $Y$ into $\cO(W_{\infty})_{-}$ and the inverse of the equivalence $$c: \cO^{\infty}(W_{\infty})\stackrel{\simeq}{\to}\Yo^{s}(\cO(W_{\infty})_{-})$$ given by \eqref{fiuhz98ifwref}. The lower right vertical map  an equivalence since the inclusion $f:M\to W$ is a coarse equivalence by assumption.  
The left vertical map is the canonical morphism $\Yo^{s}(\cO(W_{\infty}))\to  \cO^{\infty}(W_{\infty})$ appearing in the cone sequence \eqref{fgoierjgioergegergerge}. Since
the inclusion $M_{\infty}\to W_{\infty}$ is a coarse equivalence and
$M_{\infty}$ is flasque we have $\Yo^{s}(W_{\infty})\simeq 0$. 
It follows from the cone sequence
 \eqref{fgoierjgioergegergerge}  that the left vertical map in \eqref{f34oij3o4if34f34f3} is an equivalence.
 The upper left triangle commutes by  construction.

In order to define the map $j$ we consider the invariant subsets  $$M_{-}:=(-\infty,0]\times M\quad \mbox{and}\quad  M_{+}:=[0,\infty)\times M$$ of $\R\times M$.
We define a map
$\tilde j:\R\otimes M\to Y$ such that $M_{-}  \to Y_{-}$ is induced by the embedding of  $M$ into $W$, and $M_{+}  \to  Y_{+}$ is the composition
$$M_{+}\cong M_{\infty}\to W_{\infty}\cong \{0\}\times W_{\infty}\to Y_{+}\ .$$
This map  $\tilde j$ is a morphism of $G$-bornological coarse spaces    and the induced map   $M\cong M_{-}\cap M_{+}\to Y_{-}\cap Y_{+}\cong W$ coincides with $f$. 
The map $j$ is then defined as the composition
$$j:\Sigma \Yo^{s}(M)\stackrel{\susp^{-1}, \simeq}{\to} \Yo^{s}(\R\otimes M)\stackrel{\tilde j}{\to} \Yo^{s}(Y)\ .$$
  We now turn the lower right triangle in \eqref{f34oij3o4if34f34f3}  into a commuting one.
To this end we observe that 
 we  have a map of Mayer-Vietoris sequences associated to the map of triples
 $$(\R\otimes M,M_{-},M_{+})\to (Y_{\cO(W_{\infty})_{-}},Y_{-},Y_{+})$$ induced by $\tilde j$:
$$\xymatrix{\Yo^{s}(M_{-})\oplus \Yo^{s}(M_{+})\ar[d]\ar[r] &\Yo^{s}(\R\otimes M)\ar[d]^{\tilde j}\ar[r]_{\simeq}^{\susp} &\Sigma\Yo^{s}(M)\ar@{..>}[dl]^{j}\ar[r]\ar[d]^{\simeq} &\Sigma \Yo^{s}(M_{-})\oplus\Sigma  \Yo^{s}(M_{+})\ar[d]\\ \Yo^{s}(Y_{-})\oplus \Yo^{s}(Y_{+})\ar[r]&\Yo^{s}(Y_{\cO(W_{\infty})_{-}})\ar[r]^{\partial}&\Sigma \Yo^{s}(W)\ar[r]&\Sigma \Yo^{s}(Y_{-})\oplus \Sigma \Yo^{s}(Y_{-})}\ .$$
Note that $M_{-}$ and $M_{+}$ are flasque. This   again explains again why the suspension morphism is an equivalence. The upper middle triangle commutes by the definition of $j$. Since the middle square commutes (i.e., has a prefered filler) we get the filler for the lower middle triangle. 
 
\begin{rem}\label{fiwefuweofuewf98ewfewfewfef}
For later use we note the following fact. The  diagram \eqref{f34oij3o4if34f34f3}  detemines an equivalence  \begin{equation}\label{gpoijopietrg345t}
s+j:\Yo^{s}(\cO(W_{\infty}))\oplus \Sigma \Yo^{s}(M)\simeq  \Yo^{s}(Y_{\cO(W_{\infty})_{-}})   \ .
\end{equation}\hB
 \end{rem}

We now consider the map of Mayer-Vietoris sequences induced by the map of triples
$$(\cO(W_{\infty}),[0,\infty)\times W, [0,\infty)\times M_{\infty})\to (Y_{\cO(W_{\infty})_{-}},\R\times W,[0,\infty)\times M_{\infty})\ .$$ It induces the commuting lower left square in the diagram extending \ref{f34oij3o4if34f34f3}
$$\xymatrix{\cO^{\infty}(W_{\infty}) &&\\\Yo^{s}(\cO(W_{\infty}))\ar[u]_{\simeq}\ar[r]^{s}\ar[d]^{\partial^{\prime\prime}}&\Yo^{s}(Y_{\cO(W_{\infty})_{-}})\ar[d]^{\partial^{\prime}}\ar[ul]_{i}\ar[r]^{\partial}&\Sigma \Yo^{s}(W)\\
\Sigma \Yo^{s}(\cO(M))\ar@{=}[r]&\Sigma \Yo^{s}(\cO(M))&\Sigma \Yo^{s}(M)\ar[u]_{\simeq}\ar[l]\ar[ul]_{j}}\ .$$
The lower right arrow is induced by the embedding of $M$ into the cone over $M$.  
We must show that the lower triangle in the lower right square commutes.
To this end we consider the map of Mayer-Vietoris sequences associated to the map of triples
$$(\R\otimes M,M_{-},M_{+})\to (Y,\R\times W,[0,\infty)\times M_{\infty})$$ again
induced by $\tilde j$.
We get a commuting square
$$\xymatrix{\Yo^{s}(\R\otimes M)\ar[d]^{\tilde j}\ar[r]^{\susp}_{\simeq}&\Sigma \Yo^{s}(M)\ar@{..>}[dl]_{j}\ar[d]\\\Yo^{s}(Y_{\cO(W_{\infty})_{-}})\ar[r]^{\partial^{\prime}}&\Sigma \Yo^{s}(\cO(M))}\ .$$
The upper triangle commutes by the definition of $j$. Hence the lower triangle commutes.\end{proof}
 
 We now consider the following diagram  \begin{equation}\label{goijoregerg}\hspace{-1cm}
\xymatrix{\Yo^{s}(\cO(M_{-}))\ar[r]&\Yo^{s}(\cO(\R\otimes M))\ar@/^2.5cm/@{..>}[drr]_(0.4){\susp} \ar[r]^(0.4){!} &\Yo^{s}(\cO(\R\otimes M))/\Yo^{s}(\cO(M_{-})) &\\
\Yo^{s} (\cO(M))\ar[d]\ar[u]\ar[r]&\Yo^{s}(\cO(M_{+})\ar[d] \ar[u]\ar[r]&\Yo^{s}(\cO(M_{+}))/\Yo^{s}(\cO(M))\ar[u]_{\simeq}\ar[r]^(0.6){!} \ar[d]^{\simeq}&\Sigma \Yo^{s}(\cO(M) ) \\\Yo^{s}((\R\times W)_{\cO(W_{\infty})_{-}})\ar[r]  &\Yo^{s}(Y_{\cO(W_{\infty})_{-}})\ar@/_2.5cm/@{..>}[urr]^(0.4){\partial^{\prime}}\ar[r]&\Yo^{s}(Y_{\cO(W_{\infty})_{-}})/\Yo^{s}((\R\times W)_{\cO(W_{\infty})_{-}}) & }
\end{equation}
The two  left squares are push-out squares
associated to the decompositions
$(M_{-},M_{+})$ of $\R\times M$ and
$(\R\times W,\cO(M_{+}))$ of $Y_{\cO(W_{\infty})_{-}}$.
This explains the vertical equivalences in the right.
The commutativity of the upper part reflects the definition of the suspension equivalence $\susp$,
and the lower part gives the definition $\partial^{\prime}$.

We call the equivalence
$$ \Yo^{s} (\cO(\R\otimes M ))/\Yo^{s}(\cO(M_{-})) \simeq 
\Yo^{s}(Y_{\cO(W_{\infty})_{-}})/\Yo^{s} ((\R\times W)_{\cO(W_{\infty)_{-}}} )$$
the excision equivalence.

 \begin{prop}\label{fwiowoefwfewfwf}
 After going over to weakly flasque motivic spectra $G\Sp\cX_{wfl}$ the marked morphisms in \eqref{goijoregerg} become equivalences.
 \end{prop}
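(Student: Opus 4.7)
The strategy is to identify each marked morphism as the canonical projection or connecting map in a cofibre sequence whose only obstruction to being an equivalence is the vanishing of $\Yo^{s}(\cO(M_{-}))$ or $\Yo^{s}(\cO(M_{+}))$, and then to establish this vanishing after localization to $G\Sp\cX_{wfl}$ by exhibiting $M_{-}$ and $M_{+}$ as flasque $G$-uniform bornological coarse spaces.

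For the first marked morphism, the inclusion $\cO(M_{-})\hookrightarrow \cO(\R\otimes M)$ produces the cofibre sequence whose third term is $\Yo^{s}(\cO(\R\otimes M))/\Yo^{s}(\cO(M_{-}))$; the projection map from the middle becomes an equivalence in $G\Sp\cX_{wfl}$ precisely when $\Yo^{s}_{wfl}(\cO(M_{-}))\simeq 0$. For the second marked morphism, the defining cofibre sequence
\[
\Yo^{s}(\cO(M))\to \Yo^{s}(\cO(M_{+}))\to \Yo^{s}(\cO(M_{+}))/\Yo^{s}(\cO(M))\to \Sigma \Yo^{s}(\cO(M))
\]
exhibits the marked arrow as the connecting map, which is an equivalence in $G\Sp\cX_{wfl}$ precisely when $\Yo^{s}_{wfl}(\cO(M_{+}))\simeq 0$.

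It therefore suffices to prove that $\cO(M_{-})$ and $\cO(M_{+})$ are weakly flasque. By Assumption \ref{fiofojwefwefwefwefwefwe}(2) we have $M_{\infty}\cong [0,\infty)\otimes M$ as $G$-uniform bornological coarse spaces, so the induced structures on the invariant subsets $M_{-}=(-\infty,0]\otimes M$ and $M_{+}=[0,\infty)\otimes M$ of $\R\otimes M$ are themselves products. Each carries the evident flasque structure given by the shift $t\mapsto t-1$ (respectively $t\mapsto t+1$) on the half-line factor: this endomorphism is a morphism of $G$-uniform bornological coarse spaces that is uniformly and coarsely close to the identity, and its iterates eventually evacuate every bounded subset. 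This is exactly the point where the product form of Assumption \ref{fiofojwefwefwefwefwefwe}(2), rather than mere flasqueness of $M_{\infty}$, is needed, in agreement with the footnote attached to that assumption.

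Finally, invoking the property of the cone functor $\cO$ recalled before Proposition \ref{ergoierjoiregregeg}, which sends flasque $G$-uniform bornological coarse spaces to weakly flasque $G$-bornological coarse spaces, we conclude that $\cO(M_{-})$ and $\cO(M_{+})$ are weakly flasque, hence $\Yo^{s}_{wfl}(\cO(M_{\pm}))\simeq 0$. Combined with the cofibre sequence analysis above, this proves the proposition. The only nonformal step is the verification of flasqueness for $M_{\pm}$, which is routine once the product form is in hand; no further input specific to $W_{\infty}$ is needed.
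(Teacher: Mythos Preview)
Your proof is correct and follows the same approach as the paper: both establish that $M_{\pm}$ are flasque $G$-uniform bornological coarse spaces, whence $\cO(M_{\pm})$ are weakly flasque and $\Yo^{s}_{wfl}(\cO(M_{\pm}))\simeq 0$, so the marked morphisms become equivalences.

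One small correction concerning your remark on Assumption~\ref{fiofojwefwefwefwefwefwe}(2): the flasqueness of $M_{-}=(-\infty,0]\otimes M$ and $M_{+}=[0,\infty)\otimes M$ as invariant subsets of $\R\otimes M$ is immediate from the product structure of $\R\otimes M$ itself and does not invoke that assumption at all. The product form $M_{\infty}\cong [0,\infty)\otimes M$ is rather needed in setting up diagram~\eqref{goijoregerg}, namely to identify the subspace $[0,\infty)\times M_{\infty}$ of $Y_{\cO(W_{\infty})_{-}}$ with $\cO(M_{+})$ so that the lower-left square is a push-out; without it one would only have $\cO(M_{\infty})$ there, and mere coarse flasqueness of $M_{\infty}$ would not yield weak flasqueness of its cone.
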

\begin{proof}
We note that $M_{-}$ and $M_{+}$ are flasque $G$-uniform bornological coarse spaces. Therefore $\Yo^{s}_{wfl}(\cO(M_{- }))\simeq 0$ and
$\Yo^{s}_{wfl}(\cO(M_{+}))\simeq 0$.
The diagram simplifies to
 \begin{equation}\label{goijoreddgerg}\hspace{-2cm}
\xymatrix{0\ar[r]&\Yo_{wfl}^{s}(\cO(\R\otimes M))\ar@/^2.5cm/@{..>}[drr]_(0.4){\delta^{\prime}} \ar[r]^(0.4){!\simeq } &\Yo^{s}_{wfl}(\cO(\R\otimes M))  &\\
\Yo^{s}_{wfl}(\cO(M))\ar[d]\ar[u]\ar[r]&0\ar[d] \ar[u]\ar[r]&0/\Yo^{s}_{wfl}(\cO(M))\ar[u]_{\simeq}\ar[r]^(0.6){! \simeq} \ar[d]^{\simeq}&\Sigma \Yo^{s}_{wfl}(\cO(M) ) \\\Yo^{s}_{wfl}((\R\times W)_{\cO(W_{\infty})_{-}})\ar[r]  &\Yo^{s}_{wfl}(Y_{\cO(W_{\infty})_{-}})\ar@/_2.5cm/@{..>}[urr]^(0.4){\partial^{\prime}}\ar[r]&\Yo^{s}_{wfl}(Y_{\cO(W_{\infty})_{-}})/\Yo^{s}_{wfl}((\R\times W)_{\cO(W_{\infty})_{-}}) & }
\end{equation}
\end{proof}

%
%

\section{Abstract boundary value problems}\label{fiofwefewfewfewfewf}

We fix a spectrum-valued     equivariant coarse homology theory   $E$. If $X$ is a $G$-bornological coarse space, then for every integer $n$ we let
$E_{n}(X)$ denote the $n$'th stable homotopy group of the spectrum $E(X)$.

Let $X$ be a $G$-uniform bornological coarse space and $n$ be an integer.
\begin{ddd}
A class $\sigma$ in $  E_{n+1}(\cO^{\infty}(X))$
is called a symbol of degree $n$ on $X$
\end{ddd}

Let $\sigma$ be a symbol of degree $n$ on $X$.

\begin{ddd}\label{fwiowfwefewfwfw}
The class $\Ind(\sigma):=\partial^{cone}(\sigma)$ in $  E_{n}(X)$
is called the index of $\sigma$.
\end{ddd}

\begin{rem}
In Section \ref{ewfwefwfwefewf} we will motivate these definitions by relating them to the classical notions of  usual index theory. \hB
\end{rem}

We now retain the notions and assumptions introduced in Section \ref{fewzfefzewifzhiufzhiwefewf}.

Let $\sigma$ be a symbol of degree $n$ on $W_{\infty}$. Then  its index
$\Ind(\sigma)$ is uninteresting since $W_{\infty}$ is flasque and hence
$ E_{n}(W_{\infty})\simeq 0$. 

In ordinary index theory, in this situation one usually adds boundary conditions and defines an index in $ E_{n}(W)$.
Recall the map $i$ from the diagram in Proposition \ref{ergoierjoiregregeg}.
 
Let $\sigma$ be a symbol of degree $n$ on $W_{\infty}$.

\begin{ddd}\label{oriergegegergg}
A boundary condition for $\sigma$ is a choice of a class
$$\bdc(\sigma, W)   \:\:\mbox{in}\:\: E_{n+1}(Y_{\cO(W_{\infty})_{-}})$$
such that $i(\bdc(\sigma,W)) =\sigma$.
\end{ddd}

Let $\partial$ be the map as in  Proposition \ref{ergoierjoiregregeg}.
Let furthermore  $\sigma$ be a symbol of degree $n$ on $W_{\infty}$ and $\bdc(\sigma, W) $ be a  boundary condition for $\sigma$.

\begin{ddd}\label{wiwogrgregregerg} The index of $\sigma$ subject to the boundary condition $\bdc(\sigma, W)$ is 
the class
$$\Ind(\sigma, on (W)):=\partial \bdc(\sigma, W)  \:\:\mbox{in}\:\: E_{n}(W)\ .$$
 \end{ddd}

 Using Remark \ref{fiwefuweofuewf98ewfewfewfef} we get uniquely determined classes
 $$\sigma_{1}  \:\:\mbox{in}\:\: E_{n+1}(\cO(W_{\infty}))\quad \mbox{and} \quad \sigma_{2}  \:\:\mbox{in}\:\: E_{n}(M) $$
 such that
 $$s(\sigma_{1})+j(\sigma_{2})=\bdc(\sigma, W) \ .$$
 Then
 $$\sigma=i(\sigma_{1})\ , \quad \Ind(\sigma, on (W))=\partial( \sigma_{2})\ .$$
It follows that the set of choices of  boundary conditions for the symbol $\sigma$ is in bijection with the set of choices for the elements  $\sigma_{2}$, i.e, with the set $E_{n}(M)$.
Furthermore, in view of the equivalence $f:E(M)\to E(W)$, every prescribed index of the boundary value problem can be achieved  by choosing $\sigma_{2}$ appropriately. In fact, $\sigma_{2}$ is determined uniquely by the index.

Let $\partial^{\prime}$ be the map as in   Proposition \ref{ergoierjoiregregeg}.
Let furthermore  $\bdc(\sigma, W) $ be a  boundary condition for the symbol  $\sigma$  of degree $n$.

\begin{ddd}\label{fiowefoefewfewfewf}
The class $$\rho(M):=\partial^{\prime} (\bdc(\sigma,W))\in E_{n}(\cO(M))$$
is called the $\rho$-invariant of the boundary condition.
\end{ddd}

Note that the $\rho$-invariant depends on the choice of the boundary condition.
Let \begin{equation}\label{hfi83i9zfh}
\phi:=\cO(f):\cO(M)\to \cO(W)\ , \quad \psi: W\to \cO(W)
\end{equation}
be the canonical maps.
The following theorem is an abstract version of the the Atiyah-Patodi-Singer index theorem of Piazza-Schick.
\begin{theorem}[Abstract APS-Theorem]\label{wfiowefewfew}
We have the equality
$$\phi(\rho(M))=\psi(\Ind(\sigma, on (W)))\:\:\mbox{in}\:\: E_{n}(\cO(W)) \ .$$
\end{theorem}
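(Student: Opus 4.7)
The plan is to use the canonical splitting of the boundary condition afforded by Remark~\ref{fiwefuweofuewf98ewfewfewfef}: writing
\begin{equation*}
\bdc(\sigma, W) = s(\sigma_{1}) + j(\sigma_{2})
\end{equation*}
for the unique classes $\sigma_{1} \in E_{n+1}(\cO(W_{\infty}))$ and $\sigma_{2} \in E_{n}(M)$, I will compute the two sides of the desired equality separately and compare. The contributions coming from $\sigma_{2}$ will cancel for elementary geometric reasons, and the bulk of the work will be to show that the $\sigma_{1}$-contribution to $\phi(\rho(M))$ vanishes.

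For the $\sigma_{2}$-contribution the key input comes straight from the diagrams in the proof of Proposition~\ref{ergoierjoiregregeg}: exactness of the Mayer--Vietoris sequence for the decomposition $(Y_{-}, Y_{+})$ of $Y_{\cO(W_{\infty})_{-}}$ gives $\partial \circ s = 0$, the commuting middle square inside \eqref{f34oij3o4if34f34f3} identifies $\partial \circ j$ with the map induced by $f \colon M \to W$, the lower-left square of Proposition~\ref{ergoierjoiregregeg} yields $\partial' \circ s = \partial''$, and the lower-right triangle identifies $\partial' \circ j$ with the map induced by the inclusion $\iota \colon M \hookrightarrow \cO(M)$. Substituting these into the definitions gives
\begin{equation*}
\psi(\Ind(\sigma, on (W))) = (\psi \circ f)_{*}(\sigma_{2}), \qquad \phi(\rho(M)) = \phi(\partial''(\sigma_{1})) + (\phi \circ \iota)_{*}(\sigma_{2}),
\end{equation*}
and the two $\sigma_{2}$-terms agree because both $\phi \circ \iota$ and $\psi \circ f$ describe the canonical embedding $m \mapsto (0, f(m))$ of $M$ at the base of the cone $\cO(W)$.

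The hard part is then to show $\phi(\partial''(\sigma_{1})) = 0$. Here the key observation is that $\partial''$ is the Mayer--Vietoris connecting map associated to the coarsely excisive decomposition $(\cO(W), \cO(M_{\infty})) = ([0,\infty) \times W,\, [0,\infty) \times M_{\infty})$ of $\cO(W_{\infty})$, whose intersection is exactly $\cO(M)$, while $\phi = \cO(f)$ is precisely the inclusion $\cO(M) \hookrightarrow \cO(W)$ of this intersection into the first piece. I plan to produce a map from the Mayer--Vietoris cofibre triangle
\begin{equation*}
\Yo^{s}(\cO(M)) \to \Yo^{s}(\cO(W)) \oplus \Yo^{s}(\cO(M_{\infty})) \to \Yo^{s}(\cO(W_{\infty})) \xrightarrow{\partial''} \Sigma \Yo^{s}(\cO(M))
\end{equation*}
to the trivial cofibre sequence $\Yo^{s}(\cO(W)) \xrightarrow{\mathrm{id}} \Yo^{s}(\cO(W)) \to 0 \to \Sigma \Yo^{s}(\cO(W))$, using $\phi$ on the left, the projection onto the first summand in the middle, the unique map to $0$ on the right, and $\Sigma \phi$ on the shifted left. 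The first square commutes by direct inspection, so functoriality of cofibre sequences forces the rotated square to commute as well, yielding $\Sigma \phi \circ \partial'' = 0$ at the motivic level and hence $\phi(\partial''(\sigma_{1})) = 0$ in $E_{n}(\cO(W))$ after applying $E$.
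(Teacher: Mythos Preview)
Your proof is correct and follows essentially the same route as the paper's: decompose $\bdc(\sigma,W)=s(\sigma_{1})+j(\sigma_{2})$, use the commutativities of Proposition~\ref{ergoierjoiregregeg} to match the $\sigma_{2}$-terms, and kill the $\sigma_{1}$-term via $\phi\circ\partial''=0$. Note however that what you call ``the hard part'' is in fact immediate---the paper dispatches it in one sentence---since $\partial''$ is the Mayer--Vietoris boundary for the decomposition $(\cO(W),\cO(M_{\infty}))$ of $\cO(W_{\infty})$ and $\Sigma\phi$ is a component of the very next map in that fibre sequence, so $\Sigma\phi\circ\partial''\simeq 0$ by exactness without any auxiliary map of triangles.
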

\proof
We have (using the notation in the diagram of Proposition \ref{ergoierjoiregregeg})
\begin{eqnarray*}
\phi(\rho(M))&=&\phi(\partial^{\prime}(s(\sigma_{1})))+\phi(\partial^{\prime}(j(\sigma_{2})))\\&=&
\phi(\partial^{\prime\prime}(\sigma_{1}))+\psi(\Ind(\sigma, in(W)))\\
&=&\psi(\Ind(\sigma, in(W)))\ .
\end{eqnarray*}
For the first equality we use the commutativity of the diagram 
given in Proposition \ref{ergoierjoiregregeg}.
The composition $\phi\circ \partial^{\prime\prime}$ vanishes by the exactness of the Mayer-Vietoris sequence defining $ \partial^{\prime\prime}$. \hB

\begin{rem}
In Section \ref{rgiorggergerge} we will motivate the abstract definitions by relating them with the classical constructions in the case of coarse index theory of Dirac operators.  \hB
\end{rem}

\section{A coarse view on index theory of Dirac operators}\label{ewfwefwfwefewf}

In this section we describe the transition from the analysis of Dirac operators to homotopy theory. The basic object  is the  coarse index class $\Ind\cX(\Dirac_{M}, on(A))$  in $K\cX^{G}_{n}(\{A\})$ of a Dirac operator $\Dirac_{M}$ of degree $n$ which is uniformly locally positive outside of an invariant subset $A$ of $M$.  


We need two basic non-formal (i.e., really depending on analytic facts about Dirac operators) properties of this index class:
\begin{enumerate}
\item compatibility with suspension:   
Proposition \ref{feiwofwefwefewfw}.
\item  locality:     Coarse Relative Index Theorem \ref{roigeorgergergerg}
\end{enumerate}

In Section \ref{rgiorggergerge} we explain how the abstract index theory of Section \ref{fiofwefewfewfewfewf} specializes to the usual index theory on manifolds with cylindrical ends as developed by Piazza-Schick \cite{MR3286895} and Zeidler \cite{MR3551834}.

\subsection{The coarse index class with support}

In the following graded means $\Z/2\Z$-graded.
Let $n$ be a non-negative integer and consider the graded complex Clifford algebra $\Cl^{n}$ with $n$ generators. 
We consider a group $G$ and a complete Riemannian manifold $M$ with a proper action of $G$ by isometries. 

An equivariant Dirac operator of degree $n$ is a Dirac operator $\Dirac_{M}$ associated to a  graded equivariant Dirac bundle $E\to M$ with an additional fibrewise right $\Cl^{n}$-action. We require that this $\Cl^{n}$-action commutes with the Clifford multiplication by tangent vectors and is preserved by the connection.

%

  The Weizenboeck formula $$\Dirac_{M}^{2}:=\nabla^{*}\nabla+R$$ for $\Dirac_{M}$ 
  expresses the square of the Dirac operator as a sum of the Bochner Laplacian associated to the connection $\nabla$ of $E$ and a selfadjoint bundle endomorphism $R$ in $C^{\infty}(M,\End(E)^{sa})$.
 
 Let $B$ be a subset of $M$.
 \begin{ddd}
 $\Dirac_{M}$ is uniformly locally positive on $B$ if there exists a positive real number $c$  such that for every $x$ in $B$ we have
 $c \:\id_{E_{x}}\le R (x)$.   \end{ddd}

 \begin{ex} \label{gioregrgergreg}
 Assume that $M$ is a Riemannian  manifold of dimension $n$ with an isometric action of $G$. We assume that $M$ has a spin structure, and that  the action of $G$ lifts to the corresponding  $Spin(n)$-principal bundle $P\to M$. The group $Spin(n)$ can be considered as a subgroup of the even units of $\Cl^{n}$ and acts on $\Cl^{n}$ by left multiplication.  We  define the graded complex vector bundle $E^{spin}:=P\times_{Spin(n)}\Cl^{n}$. It is a
  $G$-equivariant bundle of right $\Cl^{n}$-modules. 
 
 Let  $(e_{i})_{i=1,\dots,n}$  be the generators of $\Cl^{n}$. If we   declare the products $ e_{i_{1}}\dots e_{i_{k}}$ in $\Cl^{n}$ for all $1\le i_{1}<\dots <i_{k}\le n$ to be orthonormal, then we obtain a hermitean $Spin(n)$-invariant scalar product on $\Cl^{n}$. It  induces a hermitean metric on $E$.
  
The Levi-Civita connection on $M$ uniquely lifts to an invariant connection  on $P\to M$ and induces a connection $\nabla$ on $E^{spin}$.
 We furthermore have a Clifford multiplication $c:TM\to \End(E)$. The Clifford multiplication and the connection turn $E^{spin}\to M$ into a Dirac bundle. We let
 $\Dirac_{M}^{spin}$ be the associated Dirac operator. It is a Dirac operator of degree $n$ and   satisfies the  Lichnerowicz formula
 $$(\Dirac_{M}^{spin})^{2}=\nabla^{*}\nabla+\frac{s}{4}\ ,$$
 where $s$ is the scalar curvature function on $M$. The operator $\Dirac_{M}^{spin}$ is therefore uniformly locally positive on the subsets $\{x\in M\:|\: s(x)\ge c\}$ for every positive real number $c$.
 \hB 
 \end{ex}

Let $A$ be a $G$-invariant subset of $M$.
For every $R$ in $(0,\infty)$ we let $U_{R}[A]$ denote the $R$-neighbourhood of $A$. The following definition is taken from \cite[Def. 8.2]{index}.
\begin{ddd}\label{eoihieorgregregregerge}
The subset $A$ is called a support if there exists a monotoneous sequence $(R_{i})_{i\in \nat}$ in $(0,\infty)$ tending to $\infty$ such that the inclusions $A\to U_{R_{i}}[A]$ are coarse equivalences for all $i$ in $\nat$.
\end{ddd}

 Let $\Dirac_{M}$ be an invariant Dirac operator of degree $n$ on $M$.
  We assume that there is a   $G$-invariant subset 
  $A$ of $M$ which is a support and such that $\Dirac_{M}$ is uniformly locally positive on the complement $A^{c}:=M\setminus A$.
     Then it has a coarse index class
  $$\Ind\cX(\Dirac_{M}, on(A)) \:\:\mbox{in}\:\:K\cX^{G}_{n}(\{A\})\ .$$
  In the non-equivariant situation  an  index  class with support on $A$ was first defined by Roe  \cite{roe_psc_note}.  In the equivariant case, for free proper actions, analogous index classes were introduced in Piazza-Schick \cite{MR3286895} and Zeidler \cite{MR3551834}. The index classes in these references are $K$-theory classes of versions of the Roe algebra, a $C^{*}$-algebra associated to the situation. It is a non-trivial matter to interpret them as equivariant coarse $K$-homology classes.  This problem is discussed in   \cite{index},  where we construct the index class for proper actions and as an equivariant coarse $K$-homology class as stated above.

 If $A^{\prime}$ is an  support such that $A\subseteq A^{\prime}$, then 
$\Ind\cX(\Dirac_{M}, on(A^{\prime}))$ is defined and equal to the image of  $\Ind\cX(\Dirac_{M}, on(A))$ under the natural map 
$K\cX^{G}_{n}(\{A\})\to K\cX^{G}_{n}(\{A^{\prime}\})$.

\begin{ex}\label{3ioeorgergegergre}
Below we need the following construction which allows to adapt a given Dirac operator of degree $n$ to a change of the Riemannian metric. The idea is two write the Dirac operator locally as a twisted spin Dirac operator. If we change the metric, then  we only change the spin Dirac operator part and keep the twisting fixed. Here are the details.

Let  $\Dirac_{M}$ be given as above on the Riemannian manifold $M$ with metric $g$. Then every point of $M$ admits an open neighbourhood    $U$ in  $M$ on which we can choose a spin structure $P\to U$. It induces the spin Dirac bundle $E^{spin}$ as in Example \ref{gioregrgergreg}. Let $m$ denote the dimension of $M$. 

If $n$ differs from $m$, then we use the isomorphisms $\Cl^{a}\otimes \Cl^{b}\cong \Cl^{a+b}$  for non-negative  integers $a$ and $b$ in order adjust the degrees in the following formulas.

The evaluation is  a canonical isomorphism of Dirac bundles 
$$E_{|U}\cong \left\{\begin{array}{cc} \Hom_{\Cl(TM) \otimes \Cl^{n,op}}(E^{spin}\otimes \Cl^{n-m},E_{|U})\otimes_{\Cl^{n-m}}  (E^{spin}\otimes \Cl^{n-m})& n\ge m\\
   \Hom_{\Cl(TM) \otimes \Cl^{m,op}}(E^{spin} ,E_{|U}\otimes \Cl^{m-n})\otimes_{\Cl^{m-n}} E^{spin} &   n< m\end{array} \right.$$

The first factor on the right-hand side is called the twisting bundle. 
 It is graded and carries an induced connection.

 We first explain the case $n\ge m$ in greater detail.  Note that $E^{spin}$ has a right $\Cl^{m}$-action. The tensor product $E^{spin}\otimes \Cl^{n-m}$ then has a right $\Cl^{n}$-action via the isomorphism $\Cl^{n}\cong \Cl^{m}\otimes \Cl^{n-m}$. This action and Clifford multiplication
with $TM$ on $E^{spin}$ is used in order to interpret the symbol $\Hom_{\Cl(TM) \otimes \Cl^{n,op}}$. The  left-action of $\Cl^{m-n}$ on $E^{spin}\otimes \Cl^{n-m}$ (in the first argument of $\Hom$) by multiplications   on  the second factor induces a right $\Cl^{n-m}$-action on the twisting bundle.
This is combined with the action of $\Cl^{n-m}$ by left multiplications  on the last factor in order to form the tensor product   $\otimes_{\Cl^{n-m}}$. The right $\Cl^{n}$-action on the right-hand side is induced form the right-action of $\Cl^{m} $ on $E^{spin}$ and the right-multiplication of  $\Cl^{n-m}$ on the last $\Cl^{n-m}$-factor

 We now explain the case $n<m$. The tensor product
 $E_{|U}\otimes \Cl^{m-n}$ has a right $\Cl^{m}$-action obtained from the right $\Cl^{n}$-action on $E_{|U}$ and the right-multiplication of  $\Cl^{m-n}$ on itself. This $\Cl^{m}$-action together with right $\Cl^{m}$-action on $E^{spin}$ and the Clifford multiplication of $\Cl(TM)$ is used to give meaning to the symbol $ \Hom_{\Cl(TM) \otimes \Cl^{m,op}}$. We use the left $\Cl^{m-n}$-multiplication on $\Cl^{m-n}$ in the argument of $ \Hom_{\Cl(TM) \otimes \Cl^{m,op}}$ and the right $\Cl^{m-n}$-action on $E^{Spin}$ (obtained by restriction of the $\Cl^{m}$-action) in order to interpret  the symbol $\otimes_{\Cl^{m-n}}$. There is a residual right $\Cl^{n}$-action on $E^{spin}$ which compares with the $\Cl^{n}$-action of $E_{|U}$ via the evaluation isomorphism.

 In particular, in  both cases the Clifford multiplication with tangent vectors  and the $\Cl^{n}$-action on the right-hand sides is induced from the second factor.

Assume now that $g^{\prime}$ is a second Riemannian metric on $M$.
Then we define the Dirac bundle 
$E^{\prime}\to M$ such that it is  locally  given by
$$E^{\prime}_{|U}\cong \left\{\begin{array}{cc}  \Hom_{\Cl(TM) \otimes \Cl^{n,op}}(E^{spin}\otimes \Cl^{n-m},E_{|U})\otimes_{\Cl^{n-m}}  (E^{spin,\prime}\otimes \Cl^{n-m})&   n\ge m\\  \Hom_{\Cl(TM) \otimes \Cl^{m,op}}(E^{spin} ,E_{|U}\otimes \Cl^{m-n})\otimes_{\Cl^{m-n}} E^{spin,\prime} & n< m\end{array} \right. \ .$$
where $E^{spin,\prime}$ is the spin Dirac bundle associated to the metric $g^{\prime}$. We note that $E^{\prime}$ is well-defined independent of the choice of the local spin structures.

We say that the associated Dirac operator  $\Dirac_{M}^{\prime}$ of degree $n$  is obtained from $\Dirac_{M}$ by changing the metric from $g$ to $g^{\prime}$. \hB
\end{ex}

\begin{ex}
In this example we describe the construction of the suspension $\tilde \Dirac_{M}$ of a Dirac operator $\Dirac_{M}$.

 We consider the Riemannian manifold $\tilde M:=\R\times M$ with the product metric $dr^{2}+g$, where $g$ denotes the Riemannian metric of $M$. It has an induced proper action of $G$ by isometries. 
Let $\pr:\tilde M\to M$ denote the projection. Then we can form the
graded bundle $\tilde E:=\pr^{*}E\otimes \Cl^{1}$. Using the isomorphism  $\Cl^{n}\otimes \Cl^{1}\cong \Cl^{n+1}$ we get a right $\Cl^{n+1}$-action on $\tilde E$.
Let $e$ denote the generator of $\Cl^{1}$.
The Dirac operator $\Dirac_{M}$ has a natural translation-invariant extension $\tilde M$  to $\R\times M$ of degree $n+1$
which we will denote by $\tilde  \Dirac_{ M}$.
It is given by  the (schematic) formula $\tilde \Dirac_{M}\otimes 1+\partial_{t}\otimes c(e)$, where $c(e)$ denotes left multiplication by $e$ on the $\Cl^{1}$-factor.

We use the notation $\tilde \Dirac_{M}$ in order to express the fact that this Dirac operator is obtained by a construction applied to $\Dirac_{M}$. The notation
$\Dirac_{\tilde M}$ would be used for some choice of a Dirac operator on $\tilde M$ with no precise relation with $\Dirac_{M}$.
\hB\end{ex}

\begin{rem}
%
If $M$ is a spin manifold and $\Dirac_{M}^{spin}$  is the spin Dirac operator as in Example \ref{gioregrgergreg}, then its suspension
$\tilde \Dirac^{spin}_{M}$   is isomorphic to $\Dirac^{spin}_{\R\times M}$. \hB
\end{rem}
By  \cite[Thm. 11.1]{index}   (this is the analogue and generalization of Zeidler \cite[Thm, 5.5]{MR3551834}) we have the following compatibility of coarse indices with suspension.

Let $\Dirac_{M}$ be an invariant Dirac operator on $M$ of degree $n$ which is uniformly locally positive outside of  $A$. Note that $\R\times A$ is a support in $\R\times M$.

    \begin{prop}\label{feiwofwefwefewfw}
$$ \susp(\Ind\cX(\tilde \Dirac_{ M },on(\R\times A)))=\Ind\cX(\Dirac_{M}, on(A))\ .$$
\end{prop}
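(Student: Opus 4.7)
The plan is to realize both sides inside one Mayer-Vietoris sequence and then reduce the statement to a localization argument. Recall that by construction $\susp:\Yo^{s}(\R\otimes M)\stackrel{\simeq}{\to}\Sigma\Yo^{s}(M)$ is the Mayer-Vietoris boundary map associated to the coarsely excisive decomposition $((-\infty,0]\times M,[0,\infty)\times M)$ of $\R\otimes M$; the analogous decomposition of the support $\R\times A\subseteq \R\times M$ gives the same kind of long exact sequence in $K\cX^{G}(\{-\})$. Since both halves $(-\infty,0]\times M$ and $[0,\infty)\times M$ are flasque, the groups $K\cX^{G}(\{(-\infty,0]\times A\})$ and $K\cX^{G}(\{[0,\infty)\times A\})$ vanish, so the boundary map is an isomorphism and equals $\susp$ on $\R\times A$.

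First I would check that $\R\times A$ is a support in $\R\times M$ in the sense of Definition \ref{eoihieorgregregregerge}, since the neighbourhoods $U_{R}[\R\times A]$ are coarsely equivalent to $\R\times U_{R}[A]$, and that $\tilde\Dirac_{M}$ is uniformly locally positive on $(\R\times A)^{c}=\R\times A^{c}$. The latter follows from $\tilde\Dirac_{M}=\Dirac_{M}\otimes 1+\partial_{t}\otimes c(e)$: a direct Weitzenböck computation gives $\tilde\Dirac_{M}^{2}=\Dirac_{M}^{2}\otimes 1-\partial_{t}^{2}\otimes 1$, so the curvature endomorphism of $\tilde\Dirac_{M}$ is $R\otimes 1$ and inherits the positivity constant of $R$ on $A^{c}$. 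Hence $\Ind\cX(\tilde\Dirac_{M},on(\R\times A))$ is defined.

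Second, I would apply the Coarse Relative Index Theorem \ref{roigeorgergergerg} to localize. On each half $[0,\infty)\times M$ (resp.\ $(-\infty,0]\times M$) the operator $\tilde\Dirac_{M}$ represents a coarse index class with support $[0,\infty)\times A$ (resp.\ $(-\infty,0]\times A$); these supports sit in flasque spaces, hence the corresponding coarse $K$-homology groups vanish. By naturality of Mayer-Vietoris with respect to restriction of supports, $\Ind\cX(\tilde\Dirac_{M},on(\R\times A))$ is the image under the boundary map of a uniquely determined class in $K\cX^{G}_{n}(\{A\})$, and the only remaining task is to identify that class with $\Ind\cX(\Dirac_{M},on(A))$.

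The main obstacle is this last identification, which is where the analysis of Dirac operators genuinely enters and which cannot be shortcut motivically. Conceptually, $\tilde\Dirac_{M}$ is the graded external product of the line Dirac operator $\partial_{t}\otimes c(e)$ on $\R$ with $\Dirac_{M}$, its coarse index class with support should factor as a cross product, and the coarse index of $\partial_{t}\otimes c(e)$ with its canonical support $\R$ represents the Bott generator in $K\cX^{G}_{1}(\{\R\})$; since cross product with this Bott class implements $\susp$, the proposition reduces to a product formula combined with Bott periodicity. Making this rigorous at the level of coarse index classes with support requires an explicit functional-calculus argument — decomposing the parametrix of $\tilde\Dirac_{M}$ as a tensor product of the parametrices of its two factors and tracing this decomposition through the Mayer-Vietoris boundary — which is precisely the content of \cite[Thm. 11.1]{index}.
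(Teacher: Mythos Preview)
The paper does not give its own proof of this proposition: it is stated immediately after the sentence ``By \cite[Thm.~11.1]{index} \dots\ we have the following compatibility of coarse indices with suspension'' and no proof environment follows. In the logic of the paper this result is one of the two analytic black boxes imported wholesale from \cite{index} (the other being Theorem~\ref{roigeorgergergerg}); everything else is built on top of these two inputs. Your proposal ultimately lands on the same citation, so there is no disagreement about where the actual work is done.

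Your sketch of how that external argument should go --- realize $\susp$ as the Mayer--Vietoris boundary for the half-line decomposition, observe that the suspended Weitzenb\"ock term is $R\otimes 1$, and reduce to a product formula with the Bott class of $\partial_t\otimes c(e)$ on $\R$ --- is the standard and correct picture, matching Zeidler's \cite[Thm.~5.5]{MR3551834} which the paper cites as the predecessor. One organizational remark: invoking the Coarse Relative Index Theorem~\ref{roigeorgergergerg} as a step towards this proposition runs against the grain of the paper, which treats the two as \emph{independent} analytic inputs; in \cite{index} the suspension compatibility is established directly via the product/Bott argument you describe in your final paragraph, not by bootstrapping from the relative index theorem. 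So your middle paragraph (``apply the Coarse Relative Index Theorem to localize'') is not really load-bearing --- the flasqueness of the half-cylinders and the resulting isomorphism $\susp$ are purely motivic, and the only genuinely analytic step is the identification you defer to \cite[Thm.~11.1]{index} at the end.
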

 
We consider a group $G$ and let $M$ be a complete Riemannian manifold with a proper action of $G$ by isometries and an
invariant Dirac operator $\Dirac_{M}$. We assume that there is a  support
  $A$   such that $\Dirac$ is uniformly locally positive on the complement $A^{c}:=M\setminus A$. 
Let $Z$ be an open invariant  very proper (we refer to \cite[Def. 3.7]{index} for the very technical definition of this notion) subset of $M$ which is a support. 
%



We  have the big family $\{Z^{c}\}\cap Z$ in $Z$.
We assume that for every entourage $U$ of $M$ there exists an entourage $V$ of $M$ such that $U[Z\setminus V[Z^{c}]]\cap Z^{c}=\emptyset$.
In other words, for every prescribed distance $R$ in $(0,\infty)$ there exists a member of $Y$ of $\{Z^{c}\}$ such that the distance of $Z\setminus Y$ to the complement of $Z$ is bigger than than $R$.

Let $M^{\prime}$, $A^{\prime}$, $\Dirac_{M^{\prime}}$ and $Z^{\prime}$ be similar data.

We assume that there is an equivariant diffeomorphism  $i:Z\stackrel{\cong}{\to} Z^{\prime}$ which preserves the Riemannian metric.  We assume that 
$i$ also induces a morphism of $G$-bornological spaces.
  We furthermore assume that it induces a morphism 
  between the  big families  $\{A\}\cap Z$ and  $\{A^{\prime}\}\cap Z^{\prime}$, and between the big families
$\{Z^{c}\}\cap Z$ and $\{Z^{\prime,c}\}\cap Z^{\prime}$.
 We finally  assume that the isometry $Z\cong Z^{\prime}$ is
 covered by an equivariant isomorphism of Dirac operators 
$(\Dirac_{M})_{|Z}\cong (\Dirac_{M^{\prime}})_{|Z^{\prime}}$.

\begin{ddd}
We call  data  satisfying these assumptions a coarse relative index situation.
\end{ddd}

We have a diagram of horizontal fibre sequences in $G\Sp\cX$ \begin{equation}\label{cweoijowecijocwe}
\xymatrix{\Yo^{s}(\{Z^{c}\}\cap \{A\})\ar[r]&\Yo^{s}( \{A\})\ar[r]&\Yo^{s}(  \{A\})/\Yo^{s}(\{Z^{c}\}\cap \{A\})\\\Yo^{s}(Z\cap \{Z^{c}\}\cap \{A\})\ar[r]\ar[u]\ar[d] &\Yo^{s}(Z\cap \{A\})\ar[r]\ar[u]\ar[d] & \Yo^{s}(Z\cap \{A\})/\Yo^{s}(Z\cap \{Z^{c}\}\cap \{A\})\ar[u]_{\simeq}\ar[d] \\Yo^{s}(Z^{\prime}\cap \{Z^{\prime,c}\}\cap \{A^{\prime}\})\ar[r] \ar[d]&\Yo^{s}(Z^{\prime}\cap \{A^{\prime}\})\ar[r] \ar[d]& \Yo^{s}(Z^{\prime}\cap \{A^{\prime}\})/\Yo^{s}(Z^{\prime}\cap \{Z^{\prime,c}\}\cap \{A^{\prime}\}) \ar[d]^{\simeq}\\\Yo^{s}(\{Z^{\prime,c}\}\cap \{A^{\prime}\})\ar[r]&\Yo^{s}(  \{A^{\prime}\})\ar[r]&\Yo^{s}(  \{A^{\prime}\})/\Yo^{s}(\{Z^{\prime,c}\}\cap \{A^{\prime}\})} \ . 
\end{equation}
 
The lower and the upper left square are push-outs by excision. This explains the lower and upper right vertical equivalences.
The middle vertical morphisms  are induced by the isometry $i $.

\begin{ddd}
The morphism 
$$e:\Yo^{s}(\{A\})/\Yo^{s}(\{Z^{c}\}\cap \{A\})\stackrel{\simeq}{\to} \Yo^{s}( \{A^{\prime}\})/\Yo^{s}(\{Z^{\prime,c}\}\cap \{A^{\prime}\})$$
induced by the right column in \eqref{cweoijowecijocwe}
is called the excision  morphism associated to the coarse relative index situation.
\end{ddd}

We let
$\overline{\Ind\cX(\Dirac_{M}, on(A))}$   denote the image of  $ \Ind\cX(\Dirac_{M}, on(A))$ under the natural map $$K\cX^{G}(\{A\} )\to  K\cX^{G}(\{A\},\{Z^{c}\}\cap \{A\}) \ .$$ 
We define 
the class $\overline{\Ind\cX(\Dirac_{M^{\prime}}, on(A^{\prime}))}$ in $K\cX^{G}_{*}(\{A^{\prime}\},\{Z^{\prime,c}\}\cap \{A^{\prime}\})$ similarly.
The following is shown in \cite[Thm 10.4]{index}.
\begin{theorem}[Coarse Relative Index Theorem]\label{roigeorgergergerg}
We have the equality 
$$ e(\overline{\Ind\cX(\Dirac_{M}, on(A))})=  \overline{\Ind\cX(\Dirac_{M^{\prime}}, on(A^{\prime}))}\ .$$
\end{theorem}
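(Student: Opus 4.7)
The plan is to exploit the excision equivalence provided by the right column of diagram \eqref{cweoijowecijocwe} in order to rewrite both relative groups as
$$K\cX^{G}_*\!\left(Z\cap\{A\},\, Z\cap\{Z^{c}\}\cap\{A\}\right) \quad\mbox{and}\quad K\cX^{G}_*\!\left(Z'\cap\{A'\},\, Z'\cap\{Z'^{c}\}\cap\{A'\}\right),$$
and to observe that, under these identifications, the excision morphism $e$ is induced by the equivariant isometry $i:Z\stackrel{\cong}{\to} Z'$ together with the given isomorphism of Dirac bundles over $Z$. The theorem then reduces to the statement that the image of $\Ind\cX(\Dirac_{M},on(A))$ in $K\cX^{G}_*(Z\cap\{A\},Z\cap\{Z^{c}\}\cap\{A\})$ is constructed purely from the restriction $(\Dirac_{M})_{|Z}$ (together with the bornological coarse structures on $Z$ and $A\cap Z$ induced from $M$), and analogously for the primed data; the agreement of the two classes then follows from the assumed isomorphism $(\Dirac_{M})_{|Z}\cong (\Dirac_{M^{\prime}})_{|Z'}$ by naturality.

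To carry out this localization step, I would pick, for each sufficiently coarse entourage $U$ of $M$, an entourage $V$ supplied by the hypothesis $U[Z\setminus V[Z^{c}]]\cap Z^{c}=\emptyset$, and then choose an invariant cutoff $\chi$ that is $1$ on $Z\setminus V[Z^{c}]$ and vanishes on a neighbourhood of $Z^{c}$. The hypothesis is exactly what one needs so that any operator of propagation controlled by $U$, when applied to sections supported where $\chi=1$, remains supported inside $Z$. Very properness of $Z$ (in the sense of \cite[Def.~3.7]{index}) is what allows one to produce representatives of the index class inside the Roe algebra of $Z$. One then checks that the difference between the full representative of $\Ind\cX(\Dirac_{M},on(A))$ and the cutoff version lies in the ideal of operators supported near $Z^{c}$, so the two agree in the quotient $K\cX^{G}_*(\{A\},\{Z^{c}\}\cap\{A\})$.

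Having established this localization property, the comparison with the primed side is essentially formal: the isomorphism $(\Dirac_{M})_{|Z}\cong(\Dirac_{M^{\prime}})_{|Z'}$ covering $i$, together with the compatibility of $i$ with the bornologies, the big families $\{A\}\cap Z$, $\{A'\}\cap Z'$ and the big families $\{Z^{c}\}\cap Z$, $\{Z'^{c}\}\cap Z'$, identifies the two localized representatives. Functoriality of the coarse index construction with respect to isomorphisms of equivariant Dirac bundles over equivariant isometries then yields the desired equality after applying $e$.

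The main obstacle is the analytical content of the localization step: one must verify that cutting off the parametrix (or the functional-calculus representative) of $\Dirac_{M}$ outside $Z$ changes the $K$-theory class only by something supported in $\{Z^{c}\}\cap\{A\}$. This relies on finite propagation estimates for functions of $\Dirac_{M}$, on uniform local positivity on $A^{c}$ to control the invertibility away from $A$, and on the distance hypothesis relating $Z$ and $Z^{c}$. This is precisely the non-formal ingredient that is orthogonal to the motivic framework of Section \ref{fewzfefzewifzhiufzhiwefewf} and is the reason why the proof of the statement is deferred to \cite[Thm 10.4]{index}.
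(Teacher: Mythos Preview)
The paper does not actually prove this theorem: immediately before the statement it says ``The following is shown in \cite[Thm 10.4]{index}'', and no argument is given in the present paper. So there is no proof here to compare your proposal against.

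That said, your sketch is a reasonable outline of how such a relative index theorem is typically established (localize the index representative to $Z$ modulo operators supported near $Z^{c}$, then transport via the isometry $i$), and you correctly identify at the end that the analytical localization step is the non-formal ingredient deferred to \cite[Thm 10.4]{index}. In that sense your proposal is consistent with the paper's treatment: the paper regards this result as an input from \cite{index} rather than something to be proved here.
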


\subsection{A refined symbol class}

We consider a group $G$ and let $M$ be a complete Riemannian manifold with a proper action of $G$ by isometries.

  We start with an invariant Dirac operator $\Dirac_{M}$ of degree $n$ on $M$ which is uniformly locally positive outside of a  support $A$ (see Definition \ref{eoihieorgregregregerge}).

We let $h\in C^{\infty}(\R)$ be a positive function.
 
\begin{ass}\label{grioregergrege} We assume
  that $h_{|(-\infty,0]}\equiv 1$ and $\lim_{t\to \infty} h(t)=\infty$.
  \end{ass}
Then we form the Riemannian manifold
$\tilde M_{h}$ given by $\R\times M$ with the warped product metric $dt^{2}+h^{2}(t)g$.
We form  the Dirac operator  $\tilde \Dirac_{h}$ on $\tilde M_{h}$ obtained from $\tilde \Dirac_{M}$ by changing the product metric to the warped product metric, see Example \ref{3ioeorgergegergre}. Since it  coincides with
$\tilde \Dirac_{M}$ on $(-\infty,0]\times M$
 the operator $\tilde \Dirac_{h}$ is uniformly locally positive on the subset
$(-\infty,0]\times A^{c}$. Its complement $$S:=([0,\infty)\times M)\cup  ((-\infty,0]\times A)$$ is a support.
Consequently we get a class
$$\Ind\cX(\tilde \Dirac_{h}, on(S) ) \:\:\mbox{in}\:\:K\cX^{G}_{*+1}(\{S\}_{\tilde M_{h}})\ .$$

We have a morphism  of $G$-bornological coarse spaces. 
 $$p:\tilde M_{h}\to \cO(M)_{-}$$ given by the identity of the underlying sets.

\begin{prop}\label{efwiowefwefewfewf} The class
$$p_{*}\Ind\cX(\tilde \Dirac_{h}, on (S)) \:\:\mbox{in}\:\:K\cX^{G}_{*}(\{S\}_{\cO(M)_{-}})$$ 
 does not depend on the choice of $h$.
 \end{prop}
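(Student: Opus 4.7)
The strategy is to compare two choices $h_{0}, h_{1}$ by applying the Coarse Relative Index Theorem \ref{roigeorgergergerg} on the region where they agree, and then to use the cone structure of $\cO(M)_{-}$ to promote the resulting relative equality to the desired absolute equality in $K\cX^{G}_{*}(\{S\}_{\cO(M)_{-}})$.

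Let $h_{0}, h_{1} \in C^{\infty}(\R)$ both satisfy Assumption \ref{grioregergrege}; in particular both equal $1$ on $(-\infty, 0]$. Set $Z := (-\infty, 0) \times M$, regarded as an open invariant subset of each $\tilde M_{h_{i}}$. On $Z$ the two warped metrics both reduce to the product metric $dt^{2} + g$, and the Dirac operators coincide with $(\tilde \Dirac_{M})_{|Z}$. The first task is to verify that this data, together with the identity map $Z \to Z$, constitutes a coarse relative index situation in the sense preceding Theorem \ref{roigeorgergergerg}: one must check that $Z$ is an open invariant very proper support in each warped manifold, that the identity covers an isomorphism of the restricted Dirac data, and that the separation condition \textit{``for every entourage $U$ there exists an entourage $V$ with $U[Z \setminus V[Z^{c}]] \cap Z^{c} = \emptyset$''} holds because both metrics are cylindrical in a collar of $\{0\} \times M$.

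Once this is in place, Theorem \ref{roigeorgergergerg} yields the equality
$$e\bigl(\overline{\Ind\cX(\tilde \Dirac_{h_{0}}, on(S))}\bigr) = \overline{\Ind\cX(\tilde \Dirac_{h_{1}}, on(S))}$$
in the relative coarse $K$-homology $K\cX^{G}_{*}(\{S\}_{\tilde M_{h_{i}}}, \{Z^{c}\} \cap \{S\}_{\tilde M_{h_{i}}})$. Since the two underlying-set maps $p_{h_{0}}, p_{h_{1}} : \tilde M_{h_{i}} \to \cO(M)_{-}$ literally coincide and respect the stratification by $Z$ and $Z^{c}$, pushing this equality forward through $p_{*}$ produces an equality of relative classes in $K\cX^{G}_{*}(\{S\}_{\cO(M)_{-}}, \{Z^{c}\} \cap \{S\}_{\cO(M)_{-}})$.

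The main obstacle, which I expect to be the technical heart of the argument, is to lift this relative equality to an absolute equality in $K\cX^{G}_{*}(\{S\}_{\cO(M)_{-}})$. By the long exact sequence of the pair, the discrepancy between the two absolute pushforward classes is the image of some class in $K\cX^{G}_{*}(\{Z^{c}\} \cap \{S\}_{\cO(M)_{-}})$, and here $Z^{c} = [0, \infty) \times M$ inherits from $\cO(M)_{-}$ precisely the hybrid coarse structure identifying it with the cone $\cO(M)$. My plan is to show that this discrepancy vanishes by exploiting the fact that in $\cO(M)$ the hybrid coarse structure decouples the $t$-direction from $M$ at large $t$ (where the uniform structure takes over): the pushforward of the ``cone tail'' $t \gg 0$ of either index class is controlled entirely by the operator near $\{0\} \times M$, i.e.\ by $\Dirac_{M}$ itself. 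Combining this decoupling with the suspension compatibility of Proposition \ref{feiwofwefwefewfw} and the identification $M_{\infty} \cong [0, \infty) \otimes M$ should identify both cone-part contributions with a single canonical class built from $(\Dirac_{M}, A)$, independent of the choice of $h$.
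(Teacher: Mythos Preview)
Your setup through the application of Theorem \ref{roigeorgergergerg} is reasonable, and you correctly identify the lifting from the relative to the absolute group as the crux. But your proposed resolution of that step is not an argument, and I do not see how to make it one. The discrepancy between the two absolute classes lives in (the image of) $K\cX^{G}_{*}(\{Z^{c}\}\cap\{S\}_{\cO(M)_{-}})$, and $Z^{c}\cap S=[0,\infty)\times M$ with the induced structure is precisely $\cO(M)$. This group is in general nonzero (it is where the $\rho$-invariant lives), so you cannot kill the discrepancy for free. Your claim that the cone-tail contribution is ``controlled entirely by the operator near $\{0\}\times M$'' is exactly the independence statement you are trying to prove: on $[0,\infty)\times M$ the two warped metrics and the two operators $\tilde\Dirac_{h_{0}},\tilde\Dirac_{h_{1}}$ genuinely differ, and nothing you have written pins down why their pushforwards to $\cO(M)$ coincide. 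Invoking Proposition \ref{feiwofwefwefewfw} does not help directly, since that result concerns the product metric, not the warped one.

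The paper's proof supplies the missing idea, and it is structurally different from yours. First, it does not compare two arbitrary $h_{0},h_{1}$: it orders the admissible warping functions by $h\le^{op}h'\Leftrightarrow h'\le h$, observes that this poset is filtered, and reduces to the comparable case where the identity of underlying sets gives a morphism $q:\tilde M_{h}\to\tilde M_{h'}$ of $G$-bornological coarse spaces. Second, and this is the key device you are missing, it introduces an \emph{extra suspension parameter}: on $\hat M=\R_{s}\times\R_{t}\times M$ one interpolates the warping function along $s$ via $\tilde h(s,t)=(1-\chi(s))h'(t)+\chi(s)h(t)$. Now the Coarse Relative Index Theorem is applied not on $(-\infty,0)\times M$ but on half-spaces $\{s\ge 0\}$ and $\{s\le 1\}$, where by construction the interpolated metric agrees with $h$ and $h'$ respectively. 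The passage back and forth between $\tilde M$ and $\hat M$ is mediated by Proposition \ref{feiwofwefwefewfw}, and the relative-to-absolute problem never arises because the suspension/Mayer--Vietoris boundary already produces absolute classes. In short: the geometric interpolation in the added direction is what replaces your unjustified lifting step.
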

 \begin{proof}
 The set $\cH$  of functions $h$ satisfying Assumption \ref{grioregergrege} is partially ordered by the relation $\le^{op}$ and filtered, where  $h \le^{op} h^{\prime}$ if and only of 
 $h^{\prime}(t)\le h(t)$ for all $t$ in $\R$.
  
  If $h,h^{\prime}$ in $\cH$ satisfy     $h\le^{op} h^{\prime} $, then the identity of the underlying sets is  a morphism $q:\tilde M_{h}\to \tilde M_{h^{\prime}}$ of $G$-bornological coarse spaces such that
  $$\xymatrix{\tilde M_{h}\ar[rr]^{q}\ar[dr]^{p}&&\tilde M_{h^{\prime}}\ar[dl]_{p^{\prime}}\\&\cO(M)_{-}&}$$
  commutes.
Since the partially ordered set $\cH$ is filtered  it suffices to show that \begin{equation}\label{foiwhofewfewfw}
q_{*}\Ind\cX(\tilde \Dirac_{h}, on (S))=\Ind\cX(\tilde \Dirac_{h^{\prime}}, on (S))
\end{equation}
   in $K\cX^{G}_{n}(\{S\}_{M_{h^{\prime}}})$.  
 The idea of the proof is to perform a deformation from $h$ to $h^{\prime}$. Technically, this deformation is done through a    geometric construction. We then    only use the two basic properties of the coarse index with support, namely locality and compatibility with suspension.

 We consider the manifold
 $\hat  M:=\R\times \R\times M$ and let $(s,t)$ denote the coordinates on the factor $\R\times \R$. We choose a real-valued smooth increasing function
 $\chi$ on $\R$ such that $\chi(s)=0$ for $s\le 0$ and $\chi(s)=1$ for $s\ge 1$. 
  We then consider the function on $\hat M$ given by 
 $$\tilde h(s,t,m):=(1-\chi(s)) h^{\prime}(t)+\chi(s) h(t)\ .$$
 We use the notation $h$ and $h^{\prime}$ also for   the functions
 $(s,t,m)\mapsto h(t)$ and $(s,t,m)\mapsto h^{\prime}(t)$ on $\hat M$.
  We have the inequalities  $$ h^{\prime}\le \tilde h\le   h\ ,$$
  We let $\hat M_{\tilde h}$ denote $\hat M$ with the warped product metric
  $ds^{2}+dt^{2}+\tilde h g$. We understand $\hat M_{h}$ and $\hat M_{h^{\prime}}$ in a similar manner.
  We then get following morphisms of $G$-bornological coarse spaces
 $$\hat M_{h}\stackrel{\hat q}{\to}  \hat M_{\tilde h}\stackrel{\hat q^{\prime}}{\to}  \hat M_{h^{\prime}} \ ,$$
 all induced by the identity of the underlying sets.
 The equality \eqref{foiwhofewfewfw} now follows from the following chain of equalities  
 \begin{eqnarray*}
 q_{*}\Ind\cX(\tilde \Dirac_{h}, on (S))&\stackrel{Prop. \ref{feiwofwefwefewfw}}{=}&\susp ( q_{*}(\Ind\cX(\hat \Dirac_{h}, on (\R\times S))))\\&=&\hat q^{\prime}_{*}(\hat q_{*}(\partial^{MV}(\Ind\cX(\hat \Dirac_{h}, on (\R\times S)))))   \\&\stackrel{!}{=}&
 \hat q^{\prime}_{*}(\hat q_{*}(\partial^{pair} (\overline{\Ind\cX(\hat  \Dirac_{h}, on (\R\times S))})))\\&\stackrel{!!}{=}& \hat q^{\prime}_{*}( \partial^{pair} (\hat q_{*}(\overline{\Ind\cX(\hat  \Dirac_{h}, on (\R\times S))})))\\&\stackrel{Thm. \ref{roigeorgergergerg}}{=}& \hat q^{\prime}_{*}( \partial^{pair} ( \overline{\Ind\cX(\hat  \Dirac_{\tilde h}, on (\R\times S))}))\\
&\stackrel{!!}{=}&
 \partial^{pair} ( \hat q^{\prime}_{*}(\overline{\Ind\cX( \hat \Dirac_{\tilde h}, on (\R\times S))})) \\
&\stackrel{Thm. \ref{roigeorgergergerg}}{=}&  \partial^{pair} (\overline{\Ind\cX(\hat  \Dirac_{ h^{\prime}}, on (\R\times S))})\\
 &\stackrel{!}{=}&
 \partial^{MV}  (\Ind\cX(\hat  \Dirac_{ h^{\prime}}, on (\R\times S))) \\
 &=&\susp  (\Ind\cX(\hat  \Dirac_{ h^{\prime}}, on (\R\times S))) \\
 &\stackrel{Prop. \ref{feiwofwefwefewfw}}{=}& \Ind\cX(\tilde \Dirac_{h^{\prime}}, on (S))\ .
 \end{eqnarray*}
 \begin{rem}\label{rioergergregerg}We note that the first application of the coarse relative index theorem \ref{roigeorgergergerg} is based on the coarse relative index situation where the  isometry
 $$([0,\infty)\times \R\times M)_{\hat M_{h}}\to ([0,\infty)\times \R\times M)_{\hat M_{\tilde h}}$$ is a morphism of $G$-bornological coarse spaces, but not necessarily an isomorphism. So the excision morphism might be non-invertible. \hB\end{rem}
   The unmarked equalities use the relation of the suspension equivalence with the Mayer-Vietoris boundary for the equivariant and coarse decomposition $((-\infty,0]\times \R\times M, [0,\infty)\times \R\times M)$ of $\hat M$, and the naturality of the Mayer-Vietoris boundary.
The equalities marked by $!$  are justified by  the relation of the Mayer-Vietoris boundary with the boundary of a pair sequence, schematically given by the right square of the commuting diagram 
$$\xymatrix{A\ar[r]\ar[d]&B\ar[d]\ar[r]^(0.4){x\mapsto \overline{ x}}&\frac{B}{A}\ar[d]^{\partial^{pair}}\\C\ar[r]&D\ar[r]^{\partial^{MV}}&\Sigma A }\ ,$$
where the left square is a push-out. 
The equalities marked by $!!$ use the naturality of the pair sequences.
For the upper part of the calculation we use the pairs
$(\hat M_{h}, (-\infty,0]\times \R\times M)$, and
$(\hat M_{\tilde h}, (-\infty,0]\times \R\times M) $,  and for the lower part we use the pairs $(\hat M_{\tilde h}, [1,\infty)\times \R\times M)$ and $(\hat M_{h^{\prime}}, [1,\infty)\times \R\times M)$.

 \end{proof}

 Recall the equivalence $c$ from \eqref{fiuhz98ifwref}.
 Let $h$ satisfy the Assumption \ref{grioregergrege}. 
\begin{ddd}\label{rgiowrggregeg}
 The class
$$\sigma(\Dirac_{M} ):= c^{-1}(p_{*}\Ind\cX(\tilde \Dirac_{h} ))\:\:\mbox{in}\:\: K\cX^{G}_{*}( \cO^{\infty}(M) )   $$
is called the symbol of $ \Dirac_{M} $.
\end{ddd}

 \begin{rem}
By   \cite[Sec. 14]{ass}  the functor $M\to K\cX^{G}(\cO^{\infty}(M))$ can be identified with the locally finite $K$-homology functor  $M\to KU^{lf}(M)$. Classically the symbols of Dirac operators are considered as locally finite $K$-homology classes on $M$.
This motivates our definition of the symbol here.  See also the discussion in  \cite[Sec. 17]{ass}.
 
 The equality asserted in Lemma \ref{wefiowoefwefwefewf} below motivates  to call $\partial^{cone}  (\sigma)$ the index of the symbol $\sigma$.
 \end{rem}

\begin{lem}\label{wefiowoefwefwefewf}
We have the equality
$$\Ind(\sigma(\Dirac_{M}))=  \Ind\cX(\Dirac_{M} )\:\:\mbox{in}\:\: K\cX^{G}_{*}(M)$$
\end{lem}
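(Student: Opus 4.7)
The plan is to unwind the definitions and reduce the statement to Proposition \ref{feiwofwefwefewfw} via the Coarse Relative Index Theorem \ref{roigeorgergergerg}. By Definitions \ref{rgiowrggregeg} and \ref{fwiowfwefewfwfw},
$$\Ind(\sigma(\Dirac_{M})) = \partial^{cone}\bigl(c^{-1}(p_{*}\Ind\cX(\tilde\Dirac_{h}, on(S)))\bigr).$$
The commutativity of the rightmost square of diagram \eqref{fiuhz98ifwref} identifies $\partial^{cone}\circ c^{-1}$ with $\susp\circ\partial^{geom}$. Since $p$ and $\partial^{geom}$ are both given by the identity on underlying sets, their composition $\pi:=\partial^{geom}\circ p:\tilde M_{h}\to\R\otimes M$ is the identity on sets, and it is a morphism of $G$-bornological coarse spaces because the warped metric $dt^{2}+h(t)^{2}g$ dominates the product metric (as $h\geq 1$). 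Hence
$$\Ind(\sigma(\Dirac_{M})) = \susp\bigl(\pi_{*}\Ind\cX(\tilde\Dirac_{h}, on(S))\bigr)\ .$$

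The crucial step will be to identify $\pi_{*}\Ind\cX(\tilde\Dirac_{h}, on(S))$ with $\Ind\cX(\tilde\Dirac_{M}, on(\R\times A))$ in $K\cX^{G}_{*}(\R\otimes M)$. For this I will assemble a coarse relative index situation from the two pieces of data $(\tilde M_{h}, S, \tilde\Dirac_{h})$ and $(\R\otimes M, \R\times A, \tilde\Dirac_{M})$, together with the open invariant subsets $Z=Z':=(-\infty,0)\times M$ in each and the identity as the isometry $i:Z\to Z'$. Since $h\equiv 1$ on $(-\infty,0]$, the Riemannian metrics agree on $Z$, and by the local formula of Example \ref{3ioeorgergegergre} the restrictions $\tilde\Dirac_{h}|_{Z}$ and $\tilde\Dirac_{M}|_{Z}$ coincide as Dirac operators. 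Theorem \ref{roigeorgergergerg} then yields
$$e\bigl(\overline{\Ind\cX(\tilde\Dirac_{h}, on(S))}\bigr) = \overline{\Ind\cX(\tilde\Dirac_{M}, on(\R\times A))}$$
in the excision quotient. To descend this equality to $K\cX^{G}_{*}(\R\otimes M)$, I will exploit that members of the big family $\{Z'^{c}\}\cap\{\R\times A\}_{\R\otimes M}$ are contained in thickenings of $[0,\infty)\times M\cong [0,\infty)\otimes M$, which is flasque in $\R\otimes M$. Since equivariant coarse $K$-homology vanishes on flasques, the natural map $K\cX^{G}_{*}(\{\R\times A\}_{\R\otimes M})\to K\cX^{G}_{*}(\R\otimes M)$ factors through the excision quotient of Theorem \ref{roigeorgergergerg}. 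The same observation on the $\tilde M_{h}$-side, combined with the fact that $\pi_{*}$ pushes the big family $\{Z^{c}\}\cap\{S\}_{\tilde M_{h}}$ into the flasque big family just discussed, promotes the CRIT equality to the desired equality in $K\cX^{G}_{*}(\R\otimes M)$.

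Finally, applying $\susp$ and invoking Proposition \ref{feiwofwefwefewfw} yields $\susp(\Ind\cX(\tilde\Dirac_{M}, on(\R\times A))) = \Ind\cX(\Dirac_{M}, on(A))$, whose image in $K\cX^{G}_{*}(M)$ is by definition $\Ind\cX(\Dirac_{M})$, completing the chain of equalities. The main obstacle I anticipate is the verification that the data $(Z,Z',i)$ really form a coarse relative index situation in the sense of Section \ref{ewfwefwfwefewf}: in particular the openness and very-properness of $Z$ (cf.\ \cite[Def.\ 3.7]{index}), the decay condition on entourages relative to the big family $\{Z^{c}\}$ in both $\tilde M_{h}$ and $\R\otimes M$, and the compatibility of the bornological structures and Dirac bundle isomorphism under $i$. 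Once these technical hypotheses are in place, the rest of the argument amounts to tracking big families through the excision diagram \eqref{cweoijowecijocwe}.
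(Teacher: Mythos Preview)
Your argument is correct, but it takes a different route from the paper's. The paper proves the key identity $\partial^{geom}_{*}p_{*}\Ind\cX(\tilde\Dirac_{h})=\Ind\cX(\tilde\Dirac_{M})$ by rerunning the deformation argument of Proposition~\ref{efwiowefwefewfewf}: one introduces an auxiliary $\R$-factor, interpolates between $h$ and the constant function $1$ via $\tilde h$ on $\hat M=\R\times\R\times M$, and applies the Coarse Relative Index Theorem~\ref{roigeorgergergerg} twice (once on each half of the extra $\R$-axis) together with suspension. The point is that the proof of Proposition~\ref{efwiowefwefewfewf} never uses $\lim_{t\to\infty}h'(t)=\infty$, so one may simply admit $h'\equiv 1$.

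Your approach is more direct: a single application of the Coarse Relative Index Theorem on the half-cylinder $Z=(-\infty,0)\times M$ where $\tilde M_{h}$ and $\R\otimes M$ are literally isometric, followed by the observation that the members of $\{Z'^{c}\}$ in $\R\otimes M$ are flasque, so the relative CRIT equality lifts to an absolute one in $K\cX^{G}_{*}(\R\otimes M)$. This avoids the extra $\R$-direction and the double CRIT application. The one point you should make explicit is that the excision morphism $e$ of \eqref{cweoijowecijocwe}, after pushing forward to $K\cX^{G}_{*}(\R\otimes M)$, really agrees with $\pi_{*}$; this follows from the naturality of the excision equivalences with respect to the commuting square $\pi|_{Z}=i$, $Z\hookrightarrow\tilde M_{h}\xrightarrow{\pi}\R\otimes M\hookleftarrow Z'$, together with $K\cX^{G}(\{Z'^{c}\}_{\R\otimes M})\simeq 0$. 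The paper's route costs nothing new (it is a literal repetition of an argument already in hand), whereas yours is shorter but requires this extra compatibility check and the flasqueness observation, which are specific to the target $\R\otimes M$.
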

\begin{proof}
We calculate using \eqref{fiuhz98ifwref}
\begin{eqnarray*}
\Ind(\sigma(\Dirac_{M}))&\stackrel{Def. \ref{fwiowfwefewfwfw}}{=}&\partial^{cone}(\sigma(\Dirac_{M}))\\&=&
\susp(\partial^{geom}(c(\sigma(\Dirac_{M}))))\\&=&
\susp(\partial^{geom} p_{*}\Ind\cX(\tilde \Dirac_{h} ))\\&\stackrel{!}{=}&
\susp( \Ind\cX(\tilde \Dirac_{ M}) ) \\&\stackrel{Prop. \ref{feiwofwefwefewfw}}{=}&
\Ind\cX(\Dirac_{M})
\end{eqnarray*}
 The marked equality  is shown similarly as  Proposition \ref{efwiowefwefewfewf}, but in this case we can admit $h\equiv 1$. \end{proof}

%
%

In the following we define a refined symbol which takes the positivity of the Dirac operator outside of $A$ into account.

\begin{ddd}\label{eeoijowfwfwfwefwef}
We define
$$\sigma(\Dirac_{M}, on (A)):= p_{*}\Ind\cX(\tilde \Dirac_{h}, on(S)) \:\:\mbox{in}\:\:K\cX^{G}_{*}(\{S\}_{\cO(M)_{-}})\ .$$ 
\end{ddd}

Assume now that $\Dirac_{M}$ is uniformly locally positive. Then we can take
$A=\emptyset$.
 \begin{ddd}\label{iejfofjewofwefewf}
 If   $\Dirac_{M}$ is uniformly locally positive, then we define the $\rho$-invariant
 $$\rho(\Dirac_{M}):=\sigma(\Dirac_{M}, on (\emptyset))\:\:\mbox{in}\:\: K\cX^{G}_{*+1}(\cO(M))\ .$$
 \end{ddd}
 By construction, the map
 $\Yo^{s}(\cO(M))\to \cO^{\infty}(M)$  sends
 $\rho(\Dirac_{M})$ to $\sigma(\Dirac_{M})$.

\subsection{The coarse view on boundary value problems}\label{rgiorggergerge}

We consider a group $G$ and a   Riemannian $G$-manifold $W$ with boundary $M:=\partial W$.
We assume a product structure at the boundary and that the manifold $W_{\infty}$  obtained from $W$ by attaching the infinite cylinder
$M_{\infty}:=[0,\infty)\times M$ is complete.

Let $\Dirac_{W}$ be a $G$-invariant Dirac operator on $W$ with a product structure near the boundary. Then $\Dirac_{W}$ has an extension $\Dirac_{W_{\infty}}$ and a boundary reduction $\Dirac_{M}$.

The Riemannian distances  induce $G$-uniform and $G$-bornological coarse structures structures on the manifolds.  We let $M$, $W$, and $W_{\infty}$  also denote the corresponding $G$-uniform bornological coarse spaces.

\begin{ass}\label{rgioergergerg}\mbox{}
\begin{enumerate} 
\item \label{fheuiwefewfewf}We assume that the inclusion $M\to W$ is an equivalence of $G$-bornological coarse spaces.
\item We assume that $\Dirac_{M}$ is uniformly locally positive. 
\end{enumerate}
\end{ass}
The first assumption places us in the abstract situation of Section \ref{fewzfefzewifzhiufzhiwefewf}.

We use the notation introduced in Sections \ref{fewzfefzewifzhiufzhiwefewf} and
\ref{ewfwefwfwefewf}.

The second assumption implies that $\Dirac_{W_{\infty}}$ is uniformly locally positive on $M_{\infty}$. Hence we have a class (using that the subset $W$  of  $W_{\infty}$ is a support and hence nice in order to replace $\{W\}$ by $W$ in the argument of $K\cX^{G}$)
$$\Ind\cX(\Dirac_{W_{\infty}}, on (W))\:\:\mbox{in}\:\: K\cX^{G}_{*+1}(W)\ .$$
Furthermore we have a $\rho$-invariant (Definition \ref{iejfofjewofwefewf})
$$\rho(\Dirac_{M}) \:\:\mbox{in}\:\: K\cX^{G}_{*+1}(\cO(M))\ .$$
The symbol of $\Dirac_{W_{\infty}}$ (Definition \ref{rgiowrggregeg}) is a class
$$\sigma(\Dirac_{W_{\infty}})\:\:\mbox{in}\:\: K\cX^{G}_{*+2}(\cO^{\infty}(W_{\infty}))\ .$$
Definition \ref{eeoijowfwfwfwefwef} provides     a refined symbol  class
$$ \sigma(\Dirac_{W_{\infty}}, on (W)) \:\:\mbox{in}\:\:K\cX^{G}_{*+2}(Y_{\cO(W_{\infty})_{-}})\ ,$$
where we use that $Y$ is nice in order to replace $\{Y\}$ by $Y$.
By  construction we have $$i( \sigma(\Dirac_{W_{\infty}}, on (W)))=\sigma(\Dirac_{W_{\infty}})\ .$$
Consequently, \begin{equation}\label{frihjoergergreg}
\bdc(\sigma(\Dirac_{W_{\infty}}),W):=\sigma(\Dirac_{W_{\infty}}, on (W))
\:\:\mbox{in}\:\: 
 K\cX^{G}_{*+2}(Y_{\cO(W_{\infty})_{-}})
\end{equation}
 is a boundary condition (Definition \ref{oriergegegergg})  
 for the symbol $\sigma(\Dirac_{W_{\infty}})$.
 
 \begin{rem}In general, 
  the index of an elliptic  boundary value problem is determined by the interior symbol together with the boundary condition. While setting up a boundary value problem one must incorporate the interation between the interior and the boundary symbol in a suitable way. 
In our present situation the   boundary is $M$, the boundary condition is the square-integrability on $M_{\infty}$, and the Fredholm condition is implied by the local positivity of the Dirac operator on $M_{\infty}$.
 We propose that the class   $\sigma(\Dirac_{M}, on (A))$ is a suitable object which incorporates the combination of the elliptcity of the interior symbol and the boundary condition on the $K$-theoretic level. \hB
 \end{rem}

 In the following we identify the abstract index (Definition \ref{wiwogrgregregerg}) of the abstract boundary value problem for $ \sigma(\Dirac_{W_{\infty}})$  with the concrete one.
 \begin{prop}\label{rgioergegergeg}
$$\Ind( \sigma(\Dirac_{W_{\infty}}),on (W))=\Ind\cX(\Dirac_{W_{\infty}}, on(W))\ .$$
\end{prop}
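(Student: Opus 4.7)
The plan is to unwind both sides, use the splitting from Remark~\ref{fiwefuweofuewf98ewfewfewfef} to separate the ``cone'' and ``boundary'' contributions of $\bdc$, and then match the boundary contribution with the classical index class via Proposition~\ref{feiwofwefwefewfw} and the locality Theorem~\ref{roigeorgergergerg}.

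First I would unwind the left-hand side. By \eqref{frihjoergergreg}, Definition~\ref{wiwogrgregregerg}, and Definition~\ref{eeoijowfwfwfwefwef},
\[
\Ind(\sigma(\Dirac_{W_{\infty}}), on(W)) \;=\; \partial\,\bdc \;=\; \partial\,p_{*}\Ind\cX(\tilde\Dirac_{h}, on(Y)),
\]
with $\partial$ the Mayer--Vietoris boundary of Proposition~\ref{ergoierjoiregregeg} for the decomposition $(Y_{-},Y_{+})$ of $Y_{\cO(W_{\infty})_{-}}$ and $\tilde\Dirac_{h}$ the warped suspension of $\Dirac_{W_{\infty}}$. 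By Remark~\ref{fiwefuweofuewf98ewfewfewfef} there exist unique classes $\sigma_{1}\in K\cX^{G}_{*+2}(\cO(W_{\infty}))$ and $\sigma_{2}\in K\cX^{G}_{*+1}(M)$ with $\bdc = s(\sigma_{1}) + j(\sigma_{2})$. Since $s$ factors through the inclusion $Y_{+}\hookrightarrow Y$, Mayer--Vietoris exactness gives $\partial\circ s = 0$, and by the commuting lower-right triangle of Proposition~\ref{ergoierjoiregregeg}, $\partial\circ j = \Sigma f$. Hence $\Ind(\sigma(\Dirac_{W_{\infty}}), on(W)) = \Sigma f(\sigma_{2})$. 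On the other hand, Proposition~\ref{feiwofwefwefewfw} applied to $\Dirac_{W_{\infty}}$ and its support $W$ gives
\[
\Ind\cX(\Dirac_{W_{\infty}}, on(W)) \;=\; \susp\,\Ind\cX(\tilde\Dirac_{W_{\infty}}, on(\R\times W)),
\]
where $\tilde\Dirac_{W_{\infty}}$ is the product-metric suspension and $\susp$ is the Mayer--Vietoris boundary for $((-\infty,0]\times W_{\infty},[0,\infty)\times W_{\infty})$ of $\R\otimes W_{\infty}$. Thus, using the coarse equivalence $f\colon M\to W$, it remains to match $\sigma_{2}$ with the class transported along $f^{-1}$.

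To perform this matching I would combine two ingredients. First, the alternative coarsely excisive decomposition $Y = (\R\times W)\cup([0,\infty)\times M_{\infty})$ with intersection $[0,\infty)\times M$: since $[0,\infty)\times M_{\infty}$ and $[0,\infty)\times M$ are flasque in the coarse structure induced from $\cO(W_{\infty})_{-}$, the Mayer--Vietoris sequence produces an equivalence $\Yo^{s}(\R\times W)\simeq\Yo^{s}(Y)$ via the inclusion, and by naturality the boundary $\partial$ corresponds to a Mayer--Vietoris boundary on $\R\times W$ (carrying the induced hybrid structure $\cO(W)_{-}$) with intersection $W$. Second, a two-parameter deformation modelled on the proof of Proposition~\ref{efwiowefwefewfewf}---a chain of equalities using the Coarse Relative Index Theorem~\ref{roigeorgergergerg}, Proposition~\ref{feiwofwefwefewfw}, and the naturality of pair and Mayer--Vietoris boundaries---identifies $p_{*}\Ind\cX(\tilde\Dirac_{h}, on(Y))$ in $K\cX^{G}(\{Y\}_{\cO(W_{\infty})_{-}})$ with the push-forward of $\Ind\cX(\tilde\Dirac_{W_{\infty}}, on(\R\times W))$ along the coarsening map $\R\otimes W_{\infty}\to \cO(W_{\infty})_{-}$ composed with $\R\times W\hookrightarrow Y$. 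Applying $\partial$ to this identity and invoking Proposition~\ref{feiwofwefwefewfw} yields the desired equality.

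The main obstacle is the second ingredient: the deformation must simultaneously absorb the change of metric (warped $\tilde\Dirac_{h}$ versus product $\tilde\Dirac_{W_{\infty}}$) and the accompanying change of support (from $Y$ down to the smaller $\R\times W$). The shrinking of the support is absorbed by the flasque argument of the first ingredient, and the change of metric by the locality theorem in exactly the pattern of the chain of equalities in Proposition~\ref{efwiowefwefewfewf}, so no analytic input beyond Theorem~\ref{roigeorgergergerg} and Proposition~\ref{feiwofwefwefewfw} is required.
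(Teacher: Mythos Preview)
Your overall strategy is right and coincides with the paper's, but there is a genuine gap in your ``first ingredient''. You claim that $[0,\infty)\times M_{\infty}$ and $[0,\infty)\times M$ are flasque in the coarse structure induced from $\cO(W_{\infty})_{-}$, and use this to produce an equivalence $\Yo^{s}(\R\times W)\simeq \Yo^{s}(Y_{\cO(W_{\infty})_{-}})$. This is false: with the induced hybrid structure, $([0,\infty)\times M)_{\cO(W_{\infty})_{-}}$ is exactly $\cO(M)$, which is neither flasque nor weakly flasque (its motive sits in the cone sequence and is typically non-trivial). Likewise $([0,\infty)\times M_{\infty})_{\cO(W_{\infty})_{-}}\cong \cO(M_{\infty})$ is only \emph{weakly} flasque. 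So the Mayer--Vietoris argument you run in the hybrid structure does not give the claimed equivalence, and the passage from $\partial$ on $Y_{\cO(W_{\infty})_{-}}$ to a boundary on $\R\times W$ breaks down at this point.

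The paper fixes this by reversing the order of your two steps: it first applies the coarsening $\partial^{geom}_{|Y}:Y_{\cO(W_{\infty})_{-}}\to Y_{\R\otimes W_{\infty}}$ (the identity on underlying sets), and only then runs your Mayer--Vietoris decomposition $(\R\times W,[0,\infty)\times M_{\infty})$ in the \emph{product} coarse structure $\R\otimes W_{\infty}$, where $[0,\infty)\times M_{\infty}$ and $[0,\infty)\times M$ are honestly flasque. This yields the equivalence $g:\Yo^{s}(\R\otimes W)\stackrel{\simeq}{\to}\Yo^{s}(Y_{\R\otimes W_{\infty}})$ and a commuting diagram identifying $\partial$ (after $\partial^{geom}_{|Y}$) with $\susp$. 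The deformation of $h$ to $1$ then gives $\partial^{geom}_{|Y}\,\sigma(\Dirac_{W_{\infty}},on(W))=\Ind\cX(\tilde\Dirac_{W_{\infty}},on(Y))$, which lies in the image of $g$, and Proposition~\ref{feiwofwefwefewfw} finishes. Note also that the detour through the splitting $\bdc=s(\sigma_{1})+j(\sigma_{2})$ is unnecessary: once you have the correct factorization of $\partial$ through the product structure, you compute $\partial(\bdc)$ directly without ever isolating $\sigma_{2}$.
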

\begin{proof}
By Definition \ref{wiwogrgregregerg} and \eqref{frihjoergergreg} we have
$$\Ind( \sigma(\Dirac_{W_{\infty}}),on (W))=\partial \sigma(\Dirac_{W_{\infty}}, on (W))\ .$$
The Mayer-Vietoris sequence for the decomposition $(\R\times W,[0,\infty)\times  M_{\infty})$ of $Y_{\R\otimes W_{\infty}}$ gives by flasqueness of
$([0,\infty)\times  M_{\infty})_{\R\otimes W_{\infty}}$ and
$([0,\infty)\times  M)_{\R\otimes W_{\infty}}$ an identification
$$\Yo^{s}(\R\times W)\stackrel{\simeq}{\to} \Yo^{s}(Y_{\R\otimes W_{\infty}})\ .$$
The map of Mayer-Vietoris sequences associated to the map of triples
$$(\R\otimes W,(-\infty,0]\times W,[0,\infty)\times W)\to (Y_{\R\otimes W_{\infty}},Y_{-},Y_{+})$$
induces the upper part of the  commuting diagram $$\xymatrix{\Yo^{s}(\R\otimes W)\ar[r]_{\susp}^{\simeq}\ar[d]_{g}^{\simeq}&\Sigma \Yo^{s}(W)\ar@{=}[d]\\\Yo^{s}(Y_{\R\otimes W_{\infty}})\ar[r]^{\hat \partial } &\Sigma \Yo^{s}(W)\ar@{=}[d]\\\Yo^{s}(Y_{\cO(W_{\infty})_{-}})\ar[r]^{\partial}\ar[u]^{\partial_{|Y}^{geom}}&\Sigma \Yo^{s}(W)}$$
We have $$\partial^{geom}_{|Y}  \sigma(\Dirac_{W_{\infty}},on (W))=\Ind\cX(\tilde \Dirac_{W_{\infty}},on(Y))\ ,$$
since we can again deform the warping function $h$ to the constant function $1$.
  
We now observe that the inclusion $g:\R\times W\to Y$ sends 
$\Ind\cX(\tilde \Dirac_{ W_{\infty}},on(\R\times W))$
to 
$\Ind\cX(\tilde \Dirac_{ W_{\infty}},on(Y))$.
It follows that
$$\partial  \sigma(\Dirac_{W_{\infty}},on (W))=\susp(\Ind\cX(\tilde \Dirac_{  W_{\infty}},on(\R\times W)))\stackrel{Prop. \ref{feiwofwefwefewfw}}{=}\Ind\cX(\Dirac_{W_{\infty}}, on(W))$$
 \end{proof}

 In the following we identify the abstract $\rho$-invariant (Definition \ref{fiowefoefewfewfewf}) with the concrete one. The following proposition is a version of  \cite[ Thm. 6.5.]{MR3551834}. 
 \begin{prop}\label{goergergregreg}
 We have
 $$\rho(M)=\rho(\Dirac_{M})\ .$$
 \end{prop}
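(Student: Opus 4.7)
I plan to prove Proposition~\ref{goergergregreg} by tracing $\partial'(\sigma(\Dirac_{W_\infty}, on(W)))$ through the geometric construction of $\partial'$ in Proposition~\ref{ergoierjoiregregeg} and diagram~\eqref{goijoregerg}, and identifying the result with $\sigma(\Dirac_M, on(\emptyset)) = \rho(\Dirac_M)$ via the Coarse Relative Index Theorem~\ref{roigeorgergergerg} together with the suspension compatibility of Proposition~\ref{feiwofwefwefewfw}. The structure parallels the proofs of Propositions~\ref{efwiowefwefewfewf} and~\ref{rgioergegergeg}. Unfolding~\eqref{frihjoergergreg} together with Definitions~\ref{fiowefoefewfewfewf}, \ref{iejfofjewofwefewf}, and~\ref{eeoijowfwfwfwefwef} reduces the claim to the equality
\[
\partial'\bigl(p_{*}\Ind\cX(\tilde\Dirac_{h}, on(Y))\bigr) \;=\; (p_M)_{*}\Ind\cX(\tilde\Dirac_{M,h}, on([0,\infty)\times M))
\]
in $K\cX^G_{*+1}(\cO(M))$, where $\tilde\Dirac_h$ denotes the $h$-warped Dirac operator on $\R\times W_\infty$ and $\tilde\Dirac_{M,h}$ its analogue on $\R\times M$, while $p\colon\R\times W_\infty\to\cO(W_\infty)_-$ and $p_M\colon\R\times M\to\cO(M)_-$ are the identity-on-underlying-sets morphisms.

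Since $K\cX^{G}$ is a strong equivariant coarse homology theory, the computation can be carried out in $G\Sp\cX_{wfl}$, where by Proposition~\ref{fwiowoefwfewfwf} the map $\partial'$ factors geometrically: it is the excision quotient by $\R\times W$ from the decomposition $(\R\times W,\cO(M_+))$ of $Y$, followed by the pair sequence boundary $\partial^{pair}\colon\Yo^s_{wfl}(\cO(M_+))/\Yo^s_{wfl}(\cO(M))\stackrel{\simeq}{\to}\Sigma\Yo^s_{wfl}(\cO(M))$, which is an equivalence by flasqueness of $\cO(M_+)$. Concretely, one restricts the class to the corner $\cO(M_+)=[0,\infty)_s\times M_\infty \subset Y$ (modulo the part supported on $\R\times W$) and then takes the pair boundary to $\Sigma\Yo^s_{wfl}(\cO(M))$.

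The core of the proof is then to identify, under this excision, the restriction of $\tilde\Dirac_h$ to the corner $\cO(M_+)$ with a Dirac operator constructed from $\Dirac_M$. By the product-collar assumption on $W$, the operator $\Dirac_{W_\infty}$ restricted to $M_\infty$ is the natural product extension of $\Dirac_M$, so $\tilde\Dirac_h$ restricted to $[0,\infty)_s\times[0,\infty)_t\times M$ is (up to Clifford algebra bookkeeping for the normal direction in $W$) the $s$-warping of this product extension. This fits into a coarse relative index situation with the corresponding construction on $[0,\infty)_s\times\R_t\times M$, and Theorem~\ref{roigeorgergergerg} identifies the excised index classes. A subsequent deformation of the extra warping to the constant~$1$ (in the spirit of the proof of Proposition~\ref{efwiowefwefewfewf}) combined with Proposition~\ref{feiwofwefwefewfw} collapses the extra $t$-factor and yields $(p_M)_{*}\Ind\cX(\tilde\Dirac_{M,h}, on([0,\infty)\times M)) = \rho(\Dirac_M)$.

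The main technical obstacle is making the geometric identification above precise: at the level of Dirac bundles one must verify, keeping track of the Clifford algebra adjustments for the normal direction to $M$ in $W$ and for the warping structure, that the restriction of $\tilde\Dirac_h$ to the corner $\cO(M_+)$ and the $s$-warped product extension of $\Dirac_M$ on $[0,\infty)_s\times\R_t\times M$ form a coarse relative index situation in the sense required by Theorem~\ref{roigeorgergergerg}. Once this identification is in place, the remainder of the argument is a diagram chase analogous to the calculation at the end of the proof of Proposition~\ref{efwiowefwefewfewf}, alternating naturality of the Mayer-Vietoris and pair sequence boundaries with the excision morphisms supplied by the Coarse Relative Index Theorem.
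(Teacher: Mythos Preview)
Your proposal is correct and follows essentially the same route as the paper's proof: pass to $G\Sp\cX_{wfl}$, use Proposition~\ref{fwiowoefwfewfwf} to identify $\partial'$ with passage to the relative group $K\cX^G(Y_{\cO(W_\infty)_-},(\R\times W)_{\cO(W_\infty)_-})$ via the excision equivalence, apply the Coarse Relative Index Theorem~\ref{roigeorgergergerg} to compare the warped operators on $\tilde W_{\infty,h}$ and $\widetilde{(\R\times M)}_h$ over their common piece $\R\times M_\infty\cong\R\times[0,\infty)\times M$, and then use Proposition~\ref{feiwofwefwefewfw} to collapse the extra $\R$-factor. The paper organizes this slightly more tersely by first writing $\rho(\Dirac_M)=\susp(\rho(\tilde\Dirac_M))$ and then reducing directly to a single comparison of $\sigma(\Dirac_{W_\infty},on(W))$ and $\rho(\tilde\Dirac_M)$ in the relative group; your phrase about ``deformation of the extra warping to the constant~$1$'' is unnecessary here, since no warping is present in the $t$-direction and the collapse is purely the suspension identity of Proposition~\ref{feiwofwefwefewfw}.
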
 \begin{proof}
 We use that the equivariant coarse $K$-homology $K\cX^{G}$ is a strong equivariant coarse homology theory. Hence it factorizes over $G\Sp\cX_{wfl}$.
 
In view of Proposition \ref{fwiowoefwfewfwf} and the equality
$$\susp(\rho(\tilde \Dirac_{  M}))) \stackrel{Prop. \ref{feiwofwefwefewfw}}{=}\rho(\Dirac_{M})$$ it suffices to show that
the images of 
$ \sigma(\Dirac_{W_{\infty}},on (W))$
and 
$\rho(\tilde \Dirac_{ M})$ in the relative coarse $K$-homology group $K\cX^{G}_{*+1}(Y_{\cO(W_{\infty})_{-}},(\R\times W)_{\cO(W_{\infty})_{-}})$ coincide.  
 We can apply
 the Coarse Relative Index Theorem \ref{roigeorgergergerg}.  
The manifolds $M$ and $M^{\prime} $   in  Theorem \ref{roigeorgergergerg}
 correspond to $\widetilde{ (\R\times M)}_{h}$ and
 $\tilde W_{\infty,h}$. The relevant Dirac operators coincide on the subset 
$\R\times  M_{\infty}$ of   $\widetilde{ (\R\times M)}_{h}$ and $\tilde W_{\infty,h}$, respectively.
  The subsets $A$ and $A^{\prime}$ are
 $\R\times (-\infty,0]\times M$ and $\R\times W$. Finally, the subsets $Z$ and $Z^{\prime}$ are
 $\R\times [0,\infty)\times M$ and
 $\R\times M_{\infty}$.
 We use niceness of the subsets in order to avoid forming big families.  \end{proof}
 
 Let $\phi$ and $\psi$ be as in \eqref{hfi83i9zfh}.
 
 \begin{kor}[Piazza-Schick]
$$\phi(\rho(\Dirac_{M}))=\psi(\Ind(\Dirac_{W_{\infty}}, on(W)))\ .$$
 \end{kor}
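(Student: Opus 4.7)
The plan is to reduce the corollary to the Abstract APS-Theorem \ref{wfiowefewfew} applied to the symbol $\sigma(\Dirac_{W_{\infty}})$ with the specific boundary condition \eqref{frihjoergergreg}, and then translate both sides into their analytic counterparts using the two comparison propositions that immediately precede it.

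First I would verify that we are in the setting of Section \ref{fewzfefzewifzhiufzhiwefewf} and hence can apply Theorem \ref{wfiowefewfew}: the product collar structure together with the cylindrical end $M_{\infty}=[0,\infty)\times M$ gives the decomposition $(M_{\infty},W)$ of $W_{\infty}$, Assumption \ref{rgioergergerg}.\ref{fheuiwefewfewf} guarantees that $f:M\to W$ is a coarse equivalence, and the product form $M_{\infty}\cong [0,\infty)\otimes M$ is built into the geometric setup, so Assumption \ref{fiofojwefwefwefwefwefwe} holds. The class $\sigma(\Dirac_{W_{\infty}})$ of Definition \ref{rgiowrggregeg} is a symbol of degree $n+1$ on $W_{\infty}$ in the sense of Section \ref{fiofwefewfewfewfewf}, and by the identity $i(\sigma(\Dirac_{W_{\infty}},on(W)))=\sigma(\Dirac_{W_{\infty}})$ stated in the paragraph containing \eqref{frihjoergergreg}, the class $\bdc(\sigma(\Dirac_{W_{\infty}}),W):=\sigma(\Dirac_{W_{\infty}},on(W))$ is a boundary condition in the sense of Definition \ref{oriergegegergg}.

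Second I would apply Theorem \ref{wfiowefewfew} to this data, with $E=K\cX^{G}$, which yields
$$\phi(\rho(M))=\psi(\Ind(\sigma(\Dirac_{W_{\infty}}),on(W))) \quad \text{in } K\cX^{G}_{*+1}(\cO(W)),$$
where the abstract $\rho$-invariant is $\rho(M)=\partial^{\prime}(\bdc(\sigma(\Dirac_{W_{\infty}}),W))$ per Definition \ref{fiowefoefewfewfewf} and the abstract index is $\Ind(\sigma(\Dirac_{W_{\infty}}),on(W))=\partial(\bdc(\sigma(\Dirac_{W_{\infty}}),W))$ per Definition \ref{wiwogrgregregerg}.

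Third I would rewrite the two sides using the comparison results already established in this subsection. Proposition \ref{rgioergegergeg} identifies the right-hand side: $\Ind(\sigma(\Dirac_{W_{\infty}}),on(W))=\Ind\cX(\Dirac_{W_{\infty}},on(W))$. Proposition \ref{goergergregreg} identifies the left-hand side: $\rho(M)=\rho(\Dirac_{M})$. Substituting these equalities into the conclusion of Theorem \ref{wfiowefewfew} gives exactly the claim of the corollary. There is no genuine obstacle here, since all the analytic content (compatibility with suspension and locality of the coarse index) has been absorbed into Propositions \ref{rgioergegergeg} and \ref{goergergregreg}; the corollary is obtained by a purely formal combination of the abstract APS-theorem with these two identifications.
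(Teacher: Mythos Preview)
Your proposal is correct and follows exactly the same route as the paper's proof, which simply states that the corollary follows from the Abstract APS-Theorem \ref{wfiowefewfew} in combination with Propositions \ref{rgioergegergeg} and \ref{goergergregreg}. Your version just spells out the verification of the hypotheses and the substitution in more detail.
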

\begin{proof}
This follows from the abstract APS-index Theorem \ref{wfiowefewfew} in combination with Proposition \ref{rgioergegergeg} and Proposition \ref{goergergregreg}.
\end{proof}

%

\begin{rem}
If one wants to exactly deduce the theorems of Piazza-Schick and Zeidler from these results above one must identify the relevant $\rho$-invariants. The invariants of Piazza-Schick  \cite{MR3286895} and Zeidler \cite[Def. 4.11]{MR3551834}
 are the analogues of our $\rho$-invariants, but they live in $K$-groups of some $C^{*}$-algebras adapted to the respective situation. It seems to be a non-trivial matter to
 provide a canonical identification of these $K$-groups with the coarse $K$-homology of the cone (see the discussion \cite[Sec. 16]{ass}). 
  \hB
\end{rem}

 \bibliographystyle{alpha}
\bibliography{dec1views}

\end{document}